\documentclass[12pt,a4paper]{article}

\usepackage{amsthm}
\usepackage{tikz,color}


\textwidth=150mm \hoffset=-20pt

\newtheorem{theorem}{Theorem}
\newtheorem{thm}{Theorem}[section]

\newtheorem{claim}[thm]{Claim}

\theoremstyle{remark}
\newtheorem{remark}{Remark}[section]

\usepackage{amsmath,amssymb,amsfonts,latexsym,epsfig,graphicx,longtable,wrapfig}

\title{Maximum chordal subgraphs of random graphs}

\author{Michael Krivelevich\thanks{School of Mathematical Sciences, Tel Aviv University, Tel Aviv 6997801, Israel. 
\newline
Email: krivelev@tauex.tau.ac.il. Research supported in part by USA-Israel BSF grant 2018267.}, Maksim Zhukovskii\thanks{The University of Sheffield, Department of Computer Science, Sheffield, UK.\newline Email: m.zhukovskii@sheffield.ac.uk.}}

\date{}

\begin{document}

\maketitle

\begin{abstract}
We find asymptotics of the maximum size of a chordal subgraph in a binomial random graph $G(n,p)$, for $p=\mathrm{const}$ and $p=n^{-\alpha+o(1)}$.
\end{abstract}

\section{Introduction}

A {\it chordal graph} is a graph with no induced cycles of length at least 4. Chordal graphs are one of the most studied classes of perfect graphs and graphs in general due to their beautiful characterisations, useful and diverse properties, and various applications: chordal graphs arise in constraint programming, relational databases, Bayesian networks for probabilistic reasoning, register allocation, etc. In particular, chordal completions of graphs are used to characterise some graph classes, to define the treewidth, and are related to important computational problems (see~\cite{Computers}). Structural properties of this family of graphs  help in solving hard problems (such as proper colouring, maximum clique and independent set) efficiently. Chordal graphs are also used for reconstructing evolutionary trees~\cite{Buneman} and are applied for semidefinite optimisation~\cite{VA_survey}. We refer the reader to~\cite{Golumbic_book,BGW_book} for comprehensive surveys on chordal graphs.

Given the prominence of chordal graphs, it is natural to expect extremal problems involving them to be studied. For example, in 1985, Erd\H{o}s and Laskar~\cite{EL} posed the question about the maximum integer $\ell(n,m)$ such that every graph on $[n]:=\{1,\ldots,n\}$ with $m$ edges contains a chordal subgraph with at least $\ell(n,m)$ edges. The question was answered by Gishboliner and Sudakov in~\cite{GS} --- they determined the value of $f(n,m)$ for all $m$ up to a $O(\sqrt{n})$ additive term, and found the exact value for all $m\leq\frac{n^2}{3}+1$. In particular, if $m<(1-\varepsilon){n\choose 2}$, then $f(n,m)<Cn$ for some $C=C(\varepsilon)>0$, and this is not surprising since a $K_s$-free graph on $n$ vertices does not contain a chordal subgraph with more than $(s-2)n$ edges, and, for $s$ large enough, there are $K_s$-free graphs with $m$ edges. 

In the last decades, there has been a great interest in investigating (or even transferring results related to) extremal combinatorial questions in random graph settings. Most attention was given to the Tur\'{a}n's problem (see, e.g.,~\cite{BPS,CG,MK,HS,KKM,Schacht}), and, more generally, to determining the maximum number of edges in a subgraph of random graph that belongs to a given family of graphs (see,~e.g.,~\cite{AKSamotij,BF,CMS,DMS,GL}). 

Here we consider an extremal question about chordal subgraphs in a random setting. Gishboliner in his talk~\cite{Gish} asked the following average-case question: what is the size $X_n$ of a largest chordal subgraph in the binomial random graph $G(n,p=\mathrm{const})$? In the paper we answer this question asymptotically. We also find asymptotics of the maximum size of a chordal subgraph in $G(n,n^{-\alpha+o(1)})$ for all $\alpha\neq\frac{1+k}{1+2k}$, $k\in\mathbb{Z}_{\geq 0}$.\\

We will make use of an equivalent definition of chordal graphs. In particular, it can be used to get a simple upper bound on the maximum size of a chordal subgraph in a random graph, as we show below. Let us recall that a {\it perfect elimination ordering} $v_1\prec\ldots\prec v_n$ of the vertices of a graph $H$ satisfies the following requirement: for every $i\in[n]$, the set of {\it outgoing neighbours} (i.e. the neighbours of $v_i$ among $v_1,\ldots,v_{i-1}$) induces a clique in $H$. It is easy to see that the existence of a perfect elimination ordering implies that $H$ is chordal. The opposite is also true; thus, a graph is chordal if and only if it admits a perfect elimination ordering of its vertices~\cite{chordal},~\cite[Chapter 5]{West}. 

Assuming that a chordal graph $H$ on $[n]$ has $\ell$ edges, we get that for any perfect elimination ordering, a certain vertex has at least $\ell/n$ outgoing neighbours, thus $H$ has a clique with more than $\ell/n$ vertices. Therefore, bounding the clique number of a graph provides an immediate upper bound on the maximum size of its chordal subgraph. Let $p=\mathrm{const}\in(0,1)$. Let us recall that whp\footnote{With high probability, that is, with probability tending to 1 as $n\to\infty$.} the clique number of $G_n\sim G(n,p)$ equals $(2-o(1))\log_{1/p}n$, and one can cover $n(1-o(1))$ vertices by cliques of about this size~\cite[Theorem 7.1, Lemma 7.13]{Janson}. On the one hand, it immediately implies that whp $X_n\leq 2n\log_{1/p}n$. On the other hand, since a disjoint union of cliques is chordal, we get $X_n\geq (1-o(1))n\log_{1/p}n$ whp. We shall prove that neither of these bounds is asymptotically tight.\\

 Let $\gamma$ be the unique solution in $(\max\{1,2p\},2)$ of the equation
\begin{equation}
\gamma\ln\frac{2}{\gamma}+(2-\gamma)\ln\frac{2}{2-\gamma}=(2-\gamma)\ln\frac{1}{1-p}-(1-\gamma)\ln\frac{1}{p}.
\label{eq:gamma_def}
\end{equation}
Note that a solution in $(\max\{1,2p\},2)$ of~\eqref{eq:gamma_def} is indeed unique: denote the difference between the left-hand side and the right-hand side in~\eqref{eq:gamma_def} by $g(\gamma)$. Since 
  $g'(\gamma)=\ln\frac{(2-\gamma)p}{\gamma(1-p)}$, thus $g$ decreases on $(2p,2)$. On the other hand, the values of this difference 
at $2p$ and $1$ equal $g(2p)=\ln\frac{1}{p}>0$ and $g(1)=\ln(4(1-p))$, which is positive whenever $p<\frac{3}{4}$. It remains to observe that $\max\{1,2p\}=2p$ when $p\geq\frac{3}{4}$ and that $g(2^-)=-\ln\frac{1}{p}<0$.



\begin{theorem}
Let $p=\mathrm{const}\in(0,1)$. Then whp $\left|X_n-\gamma n\log_{1/p} n\right|\leq 10n\log_{1/p}\log n$.
\label{th:main}
\end{theorem}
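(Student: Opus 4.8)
The plan is to prove matching upper and lower bounds on $X_n$, each of which is within the stated $O(n\log_{1/p}\log n)$ error of $\gamma n\log_{1/p}n$. The value $\gamma$ is defined by a balance equation, which strongly suggests that the extremal chordal subgraph is built by a greedy/random process whose size is optimized over a one-parameter family, with $\gamma$ being the optimal parameter. Throughout I will work with the perfect elimination ordering (PEO) characterisation recalled in the excerpt: a chordal subgraph on $[n]$ with $\ell$ edges corresponds to an ordering in which each vertex $v_i$ has a set of outgoing neighbours forming a clique, and $\ell$ is the sum of the outgoing degrees. So the task is to find an ordering of the $n$ vertices and, for each vertex, a clique among its already-placed neighbours, maximizing the total number of clique-edges chosen (counting each edge once, at its later endpoint).

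\medskip

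\noindent\textbf{Lower bound.} First I would construct a chordal subgraph of size at least $\gamma n\log_{1/p}n - O(n\log_{1/p}\log n)$. The guiding heuristic behind equation~\eqref{eq:gamma_def} is to process vertices one at a time and, for a parameter controlling the target outgoing-clique size, greedily attach each new vertex $v_i$ to a clique of size roughly $\gamma\log_{1/p}n$ sitting inside the already-placed set $\{v_1,\dots,v_{i-1}\}$ and inside $N(v_i)$. The left-hand side of~\eqref{eq:gamma_def} is an entropy term counting (via a first/second moment or union-bound estimate) the number of candidate cliques of relative size $\gamma$, while the right-hand side encodes the probability that such a clique lies entirely in the neighbourhood of the new vertex; setting them equal pins down the largest $\gamma$ for which we can whp keep finding such cliques for almost all of the $n$ vertices. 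Concretely I would set the target clique size to $k=\lceil\gamma\log_{1/p}n - C\log_{1/p}\log n\rceil$ and show, by a second moment / concentration argument run over the $n$ insertion steps, that whp all but an $o(1)$-fraction of vertices admit an outgoing clique of size $k$, yielding $\ell \ge (1-o(1))\,k\,n = \gamma n\log_{1/p}n - O(n\log_{1/p}\log n)$.

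\medskip

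\noindent\textbf{Upper bound.} For the upper bound I would sharpen the crude clique-number argument from the excerpt. That argument only used the single largest outgoing degree; instead, for any chordal subgraph with $\ell$ edges I would consider the \emph{profile} of outgoing clique sizes across the PEO and bound $\ell$ by $\sum_i (\text{outgoing clique size of } v_i)$. The point is that not too many vertices can have large outgoing cliques simultaneously: a vertex whose outgoing clique has size $t$ witnesses a $(t{+}1)$-clique in $G_n$, and in $G(n,p)$ the number of cliques of size exceeding $(2+\varepsilon)\log_{1/p}n$ is whp zero while the number of cliques of size near $\gamma\log_{1/p}n$ is controlled. The cleanest route is a union bound over all orderings and all choices of outgoing cliques: I would estimate, for a prospective total size $\ell = (\gamma+\varepsilon)n\log_{1/p}n$, the expected number of chordal subgraphs of that size via the PEO encoding, bound it by $\exp$ of the difference of the two sides of~\eqref{eq:gamma_def} scaled appropriately, and conclude it is $o(1)$ exactly when $\ell$ exceeds the threshold set by $\gamma$. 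The monotonicity facts established for $g(\gamma)$ in the excerpt ($g$ decreasing on $(2p,2)$, with the stated sign changes) are precisely what makes this first-moment bound cross zero at $\gamma$.

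\medskip

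\noindent\textbf{Main obstacle.} The hardest part will be the upper bound, specifically controlling the union bound over \emph{all} PEOs and all clique assignments without the entropy of the orderings ($\log n! \sim n\log n$) swamping the gain. The number of orderings is enormous, so a naive first moment over labelled structures will not close; the argument must count chordal subgraphs more cleverly — e.g. fixing the \emph{unlabelled} clique-attachment structure, or encoding each chordal subgraph economically by the sequence of outgoing cliques and exploiting that these cliques genuinely appear in $G_n$ (so their joint probability, not just cardinality, is what matters). Balancing this encoding cost against the probabilistic gain so that the exponent matches $g(\gamma)=0$ is the crux, and the $10n\log_{1/p}\log n$ slack in the theorem is exactly the room needed to absorb the lower-order polynomial-in-$\log n$ factors arising from the clique-size fluctuations and the encoding overhead. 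On the lower-bound side the main technical care is handling the dependence between successive insertion steps; I would manage this by revealing the random graph in a controlled order (exposing edges from each new vertex only to the already-placed set) so that the availability of a suitable outgoing clique at step $i$ depends on fresh randomness, making a concentration argument over the $n$ steps tractable.
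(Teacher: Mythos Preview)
Your architecture (construct for the lower bound, first moment for the upper) is right, but both halves miss the specific mechanism, and your reading of $\gamma$ is off in a way that derails the lower bound. You interpret the two sides of \eqref{eq:gamma_def} as ``number of candidate $\gamma\log_{1/p}n$-cliques'' versus ``probability one lies in $N(v_i)$''. That is not what the equation says: it encodes $\mathbb{P}[\mathrm{Bin}(k,p)=s]=n^{-1+o(1)}$ for $k\sim 2\log_{1/p}n$ and $s\sim\gamma\log_{1/p}n$ (the left side is $\log\binom{k}{s}/\log_{1/p}n$, and the right side accounts for $p^s(1-p)^{k-s}$ and the target $1/n$). Correspondingly, the construction is not ``find a $\gamma\log_{1/p}n$-clique inside $N(v_i)$''. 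Your greedy scheme has two concrete problems. First, the outgoing neighbours of $v_i$ must form a clique \emph{in $H$}, not merely in $G_n$, and you have no handle on which cliques sit inside the evolving $H_{i-1}$. Second, if you did allow yourself any clique in $G_n[N(v_i)]$, its clique number is $(2-o(1))\log_{1/p}n$ (since $|N(v_i)|\sim pn$ with $p$ constant), so your one-parameter optimum would be $2$, not $\gamma$; the fact that the true answer is $\gamma<2$ is exactly because such per-vertex cliques cannot be glued into one chordal graph. The paper's construction separates the roles: put aside $V$ of size $n/\ln n$, tile $V$ by disjoint anchor cliques $K_1,\dots,K_m$ of size $k\sim 2\log_{1/p}n$, and for each remaining vertex $u$ find an anchor $K_j$ in which $u$ has $\ge s\sim\gamma\log_{1/p}n$ neighbours and attach $u$ to that sub-clique. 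Since $m=\Theta(n/\log^2 n)$ and the per-anchor success probability is $n^{-1+o(1)}$, every $u$ succeeds whp; any ordering with $V$ before $U$ is then a PEO automatically.

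For the upper bound you correctly flag that a naive union over $n!$ orderings is fatal, but you do not supply the device that rescues it. The paper uses the \emph{perfect elimination tree}: for connected chordal $H$ with PEO $\prec$, send each $v_i$ to its last outgoing neighbour $\nu(v_i)$; the edges $\{v_i,\nu(v_i)\}$ form a spanning tree $T$, and $H$ is recovered from $T$ by specifying, for each $i$, which subset of the already-built clique $K_{\nu(v_i)}$ the vertex $v_i$ joins. One then union-bounds over $n^{n-1}$ rooted trees. The key probabilistic point is that, exposing $G_n$ layer by layer along $T$, the outdegree of $v_i$ is $\mathrm{Bin}(|K_{\nu(v_i)}|,p)$, and since whp no clique exceeds $k_+\sim 2\log_{1/p}n$, the sum $\sum_i d(v_i)$ is stochastically dominated by $\mathrm{Bin}(k_+n,p)$. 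A single large-deviation bound then gives $\rho\le (\ln n)^n(\mathbb{P}[\mathrm{Bin}(k_+,p)=s])^n$, and with $s$ chosen so that $\mathbb{P}[\mathrm{Bin}(k_+,p)=s]\le n^{-1}(\ln n)^{-2}$ the factor $n^{n-1}$ is exactly cancelled with a $(\ln n)^{-n}$ to spare. Without the tree encoding and the stochastic-domination step your first moment will not close; these are the missing ideas, not merely bookkeeping to be absorbed in the $10n\log_{1/p}\log n$ slack.
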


Note that $\gamma>1$, so a union of disjoint cliques of size $(2-o(1))\log_{1/p}n$ is indeed not an asymptotically optimal choice of a chordal subgraph. In particular, when $p=1/2$, then $\gamma=1.7799...$ satisfies $\gamma\ln\frac{2}{\gamma}+(2-\gamma)\ln\frac{2}{2-\gamma}=\ln 2.$ We prove Theorem~\ref{th:main} in Section~\ref{sc:proof_th1}. Note that $\gamma$ is chosen so that $\mathbb{P}(\mathrm{Bin}(k,p)=s)=n^{-1+o(1)}$ for $k\sim 2\log_{1/p}n$ and $s\sim \gamma\log_{1/p}n$ --- see Claim~\ref{cl:main_technical_inequality}. This equality is crucial for both the upper and the lower bounds on $X_n$ that are proven in Sections~\ref{sc:proof_dense_lower}~and~\ref{sc:ub} respectively. In particular, it guarantees that, for an appropriately chosen $k$ and $s$ as above, whp every vertex of $[n]\setminus[n/\ln n]$ has $s$ neighbours in some $k$-clique which is entirely inside $[n/\ln n]$. This observation facilitates proving the lower bound, see Section~\ref{sc:proof_dense_lower} for details.\\

We then investigate the problem of tractability of searching for large chordal subgraphs in random graphs. In Section~\ref{sc:complexity} we show that there is a polynomial-time algorithm that finds in $G_n$ a chordal subgraph with $(1-o(1))n\log_{1/p}n$ edges whp and that a further improvement of this result is quite unlikely.\\ 

We also study maximum sizes of chordal subgraphs in sparse random graphs. Note that, when $p<n^{-\varepsilon}$ for some constant $\varepsilon>0$, whp $G_n\sim G(n,p)$ does not contain cliques of size $\lceil 2/\varepsilon\rceil +1$ implying that whp $G_n$ does not contain chordal graphs with at least $\lceil 2/\varepsilon\rceil n$ edges. We found asymptotics of the maximum size of a chordal subgraph of $G(n,n^{-\alpha+o(1)})$ for all positive constants $\alpha\neq\frac{1+k}{1+2k}$, $k\in\mathbb{Z}_{\geq 0}$. 

First of all, for the very sparse case $p\leq c/n$, for some constant $0<c<1$, recall that whp all (for $p=o(1/n)$) or nearly all (for $p=\Theta(1/n)$) edges of $G_n$ lie in tree components. Thus, by taking $F$ to be a maximal spanning forest of $G_n$, we conclude that in this regime whp $X_n=\binom{n}{2}p(1+o(1))$ (unless $p=\Theta(n^{-2})$ --- in that case the number of edges is not concentrated, and $X_n$ equals the number of edges, which is bounded in probability).


The following theorem establishes the limit in probability of $X_n/n$ when $p=n^{-\alpha+o(1)}$, for $\alpha\in(0,1]$ such that $\alpha\neq\frac{1+k}{1+2k}$, for all $k\in\mathbb{Z}_{\geq 0}$.

\begin{theorem}
Let $p=n^{-\alpha+o(1)}$, for a constant $\alpha>0$.
\begin{enumerate}
\item If $\alpha\in(1/2,1]$ and $\alpha\notin\left\{\frac{1+k}{1+2k}:\,k\in\mathbb{Z}_{\geq 0}\right\}$, 
  then $X_n/n\stackrel{\mathbb{P}}\to\frac{1+2k_{\alpha}}{1+k_{\alpha}}$,
where $k_{\alpha}$ is the largest integer such that $\alpha<\frac{1+k_{\alpha}}{1+2k_{\alpha}}$.
\item For any $\alpha\in(0,1/2]$, we have that  $X_n/n\stackrel{\mathbb{P}}\to 1/\alpha$.
\end{enumerate}
\label{th:main_2}
\end{theorem}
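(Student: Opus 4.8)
The plan is to establish matching lower and upper bounds on $X_n$, both organised around one family of extremal chordal gadgets. For an integer $t\ge 1$ let the \emph{book} $B_t$ be the graph consisting of $t$ triangles glued along one common edge, so $|V(B_t)|=t+2$ and $|E(B_t)|=2t+1$. The first-moment count of copies of $B_t$ in $G(n,p)$ is $n^{(t+2)-\alpha(2t+1)+o(1)}$, which is $n^{1+\Omega(1)}$ exactly when $\alpha<\tfrac{t+1}{2t+1}=:\alpha_t$; thus $\alpha_t$ is the threshold below which $B_t$ is abundant, and $k_\alpha$ (as in the statement) is the largest $t$ with $B_t$ abundant. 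The central observation is that gluing many copies of a fixed chordal gadget $F$ along \emph{single} vertices into a ``cactus'' produces a chordal graph whose edge-to-vertex ratio tends to $\tfrac{|E(F)|}{|V(F)|-1}$; for $F=B_t$ this ratio is $\tfrac{2t+1}{t+1}$, precisely the value claimed in Theorem~\ref{th:main_2}(1) when $t=k_\alpha$.

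For the lower bound I would build such a cactus explicitly. When $\alpha\in(1/2,1]$ I take $F=B_{k_\alpha}$: since $B_{k_\alpha}$ is abundant, a second-moment argument shows whp a positive proportion of its copies can be chosen essentially vertex-disjoint, and a greedy procedure (completed by an absorption step for the last $o(n)$ vertices) assembles $(1-o(1))\tfrac{n}{k_\alpha+1}$ of them into one chordal cactus covering all but $o(n)$ vertices, each new book sharing exactly one already-placed vertex. This gives $X_n\ge(1-o(1))\tfrac{2k_\alpha+1}{k_\alpha+1}\,n$. For $\alpha\in(0,1/2]$ the same scheme is run with $F$ a large \emph{balanced} chordal graph (e.g.\ a suitably uniform partial $\lceil 1/\alpha\rceil$-tree) whose maximum subgraph density stays below $1/\alpha$ while its cactus ratio $\tfrac{|E(F)|}{|V(F)|-1}$ tends to $1/\alpha$; such gadgets exist precisely because $\alpha\le 1/2$ permits arbitrarily dense abundant chordal pieces, yielding $X_n\ge(1-o(1))\,n/\alpha$.

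The upper bound is where the real work lies. Fix a maximum chordal subgraph $H$ and a perfect elimination ordering $v_1\prec\dots\prec v_n$, and let $s_i$ be the number of outgoing neighbours of $v_i$. Then $|E(H)|=\sum_i s_i=(n-n_0)+\sum_i(s_i-1)^+$, where $n_0=\#\{i:s_i=0\}$ counts the components; each unit of the ``excess'' $\sum_i(s_i-1)^+$ is an edge $v_ia_j$ ($j\ge 2$) that, with the first outgoing neighbour $a_1$ of $v_i$, spans a distinct triangle of $H$, so the excess is at most the number of triangles of $H$. I would first show by a first-moment bound that whp $G_n$ contains only $o(n)$ copies of any chordal graph on $k_\alpha+3$ vertices with $2k_\alpha+3$ edges (and only $o(n)$ copies of $K_4$), the relevant threshold being exactly $\alpha>\alpha_{k_\alpha+1}$. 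Consequently, up to $o(n)$ exceptions, the triangles of $H$ split into edge-disjoint ``blobs'' that are $2$-trees on at most $k_\alpha$ triangles (any further edge-sharing would create a forbidden configuration). Writing $p_j$ for the number of $j$-triangle blobs, edge-disjointness gives $|E(H)|\ge\sum_j(2j+1)p_j-o(n)$, while the excess identity gives $|E(H)|\le n+\sum_j j\,p_j+o(n)$. Subtracting yields $\sum_j(j+1)p_j\le n+o(n)$, and since $\tfrac{j}{j+1}\le\tfrac{k_\alpha}{k_\alpha+1}$ for $1\le j\le k_\alpha$ we get $\sum_j j\,p_j\le\tfrac{k_\alpha}{k_\alpha+1}(n+o(n))$, hence $|E(H)|\le(1+o(1))\tfrac{2k_\alpha+1}{k_\alpha+1}\,n$, matching the construction.

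For $\alpha\le 1/2$ the same scheme applies with the blobs being higher-clique-number chordal pieces rather than $2$-trees; the analogous linear program then has optimum $1/\alpha$ rather than a plateau value, reflecting that dense balanced chordal gadgets are no longer scarce. I expect the upper bound to be the main obstacle: the bounds from clique counts alone (each vertex contributing at most $\omega-1\approx 2/\alpha$ backward edges) overshoot the truth by a factor close to $2$, so one genuinely needs the scarcity of the critical chordal configurations together with the edge-counting/LP balancing. Two further points demanding care are (i) controlling the exceptional blobs and the near-critical overlaps, which is exactly what forces the exclusion of the thresholds $\alpha=\tfrac{1+k}{1+2k}$, where the relevant counts are only $n^{1+o(1)}$ and the configuration is neither abundant nor scarce; and (ii) the assembly step in the lower bound, where turning an abundant supply of gadgets into a near-spanning cactus requires a greedy argument reinforced by absorption to capture the final $o(n)$ vertices.
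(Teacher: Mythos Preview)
Your approach for $\alpha>1/2$ is essentially the paper's, repackaged. The paper also reduces the upper bound to the block decomposition of a maximum chordal subgraph, shows that up to $o(n)$ exceptions the $2$-connected blocks are $K_4$-free (hence $2$-trees) of bounded size, and then bounds the edge count by maximising the ratio $|E(B)|/(|V(B)|-1)$ over the allowed blocks. Your excess--versus--triangles inequality together with the LP is a pleasant rephrasing of the paper's ratio bound $\rho(F)\le\frac{\sum_i(2i-1)x_H}{\sum_i i\,x_H}$, which is maximised at the largest admissible block; both routes land on $\sum_j(j+1)p_j\le n+o(n)$ and conclude identically. On the lower-bound side the paper uses squares of paths rather than books (same vertex and edge counts), but instead of your greedy-plus-absorption assembly it simply glues $M$ copies of the gadget sequentially into one chordal graph $F(M)$ with unchanged maximum $1$-density and then invokes Ruci\'nski's almost-tiling theorem. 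This is considerably shorter: no second moment, no absorption, no need for the cactus to be connected.

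The genuine gap is at $\alpha\le 1/2$. Your block-LP does not extend as written: once $\alpha\le 1/2$ the $2$-connected blocks of a chordal subgraph can have unbounded size, so the forbidden configurations are no longer a finite list and a first moment over fixed graphs cannot dispose of them. To salvage the block route you would still have to show that whp every $2$-connected chordal subgraph $B\subset G_n$ satisfies $|E(B)|\le(1/\alpha+\varepsilon)(|V(B)|-1)$, and summing over all $v=|V(B)|$ forces you to enumerate labelled chordal graphs on $v$ vertices --- precisely the perfect-elimination-tree count (a rooted tree times bounded-outdegree choices for each $\mathbf e_i$). The paper sidesteps blocks entirely here and runs this first moment once on the whole $n$-vertex chordal subgraph: at most $n^{n-1}A^n$ candidates for a constant $A$, each present with probability $p^{(1/\alpha+\varepsilon)n}$, product $o(1)$. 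Your sketch does not supply this enumeration, and without it the sentence ``the analogous linear program then has optimum $1/\alpha$'' has no force. The lower bound for non-integer $1/\alpha$ is also more delicate than ``a suitably uniform partial $\lceil 1/\alpha\rceil$-tree'': one needs an explicit chordal gadget that is \emph{strictly} $1$-balanced with $1$-density approaching $1/\alpha$ from below, and the paper constructs one via a greedy outdegree sequence $x_i\in\{\ell-1,\ell\}$ and then verifies strict $1$-balancedness by a separate case analysis --- this verification is the non-obvious step.
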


The proof of Theorem~\ref{th:main_2} appears in Section~\ref{sc:sparse}. In Section~\ref{sc:remarks} we pose several further questions.

\section{Dense random graphs: proof of Theorem~\ref{th:main}}
\label{sc:proof_th1}

\subsection{Preliminaries}

Let $p=\mathrm{const}\in(0,1)$,
$$
k_+=k_+(n,p)=\lceil 2\log_{1/p}n\rceil,
\quad
k_-=k_-(n,p)=\lceil  2\log_{1/p}n-9\log_{1/p}\log_{1/p} n\rceil. 
$$
We will use the following well-known bounds on the maximum size of a clique in $G_n$ (see~e.g.,~\cite[Theorem 7.1, Lemma 7.13]{Janson}).

\begin{claim}
Whp in $G_n$
\begin{enumerate}
\item there are no cliques of size $k_+$;
\item every set of vertices of size at least $\frac{n}{\ln^2 n}$ contains a clique of size $k_-$.
\end{enumerate}
\label{cl:cliques}
\end{claim}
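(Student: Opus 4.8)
The plan is to prove both parts of Claim~\ref{cl:cliques} via the first and second moment methods applied to the number of cliques of the relevant size, these being entirely standard for clique numbers of $G(n,p)$. The key quantities are the expected number of $k$-cliques, $\mu_k=\binom{n}{k}p^{\binom{k}{2}}$, and the crucial observation that $k_+$ and $k_-$ straddle the point where $\mu_k$ transitions from exponentially large to exponentially small. Concretely, I would first record the rough estimate $\ln\mu_k=k\ln n-\binom{k}{2}\ln\frac{1}{p}+O(k)$, and verify that at $k=k_+=\lceil 2\log_{1/p}n\rceil$ we have $\binom{k}{2}\ln\frac{1}{p}\approx 2(\log_{1/p}n)^2\ln\frac{1}{p}\cdot(1+o(1))$ dominating $k\ln n\approx 2(\log_{1/p}n)\ln n$, so that $\mu_{k_+}\to 0$, while at $k=k_-$ the subtraction of the $9\log_{1/p}\log_{1/p}n$ term shifts the balance so that $\mu_{k_-}\to\infty$ polynomially fast. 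This margin is deliberately chosen to give enough room for the union/concentration bounds below.

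For part~(1) I would apply the first moment method: the probability that $G_n$ contains a clique of size $k_+$ is at most $\mu_{k_+}=\binom{n}{k_+}p^{\binom{k_+}{2}}$, and the computation above shows $\mu_{k_+}=o(1)$, so whp there are no $k_+$-cliques. For part~(2), which is slightly more delicate because we must guarantee a clique inside \emph{every} large vertex set simultaneously, I would fix an arbitrary set $S$ with $|S|=m:=\lceil n/\ln^2 n\rceil$ and bound from below the probability that $G_n[S]$ contains a $k_-$-clique using the second moment method (Chebyshev), i.e.\ estimate $\mathrm{Var}(Y_S)/(\EXP Y_S)^2$ for $Y_S$ the number of $k_-$-cliques in $S$. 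The standard calculation bounds this variance ratio by a sum over the overlap $i$ of two $k_-$-cliques of terms $\binom{k_-}{i}\binom{m-k_-}{k_--i}/\binom{m}{k_-}\cdot p^{-\binom{i}{2}}$, which is $o(1)$ provided $\EXP Y_S\to\infty$ sufficiently fast — precisely what the choice of $k_-$ guarantees even with $m$ only of order $n/\ln^2 n$. Thus $\PR(Y_S=0)=o(1)$; the point is that this probability is in fact super-polynomially small, of the form $\exp(-\Omega(m/\ln n))$ or better, small enough to survive a union bound.

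The main obstacle, and the step I would be most careful about, is exactly this union bound over all sets $S$ of size $m$: there are $\binom{n}{m}\le 2^n$ such sets, so I need $\PR(Y_S=0)$ to decay faster than $2^{-n}$. A plain Chebyshev bound of the form $O(1/\EXP Y_S)$ is far too weak for this, since $\EXP Y_S$ is only polynomially large in $n$. To get the required exponential decay I would instead invoke a concentration inequality suited to the lower tail of subgraph counts — either Janson's inequality (which gives $\PR(Y_S=0)\le e^{-\EXP Y_S + \Delta/2}$, where $\Delta$ is the standard dependency sum $\sum_{A\sim B}\EXP(\mathbf{1}_A\mathbf{1}_B)$ over overlapping clique-pairs) or a martingale/Talagrand-type bound. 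The delicate calibration is that the chosen gap of $9\log_{1/p}\log_{1/p}n$ between $k_+$ and $k_-$ must make $\EXP Y_S - \Delta/2$ grow like $n^{1+\Omega(1)}$, comfortably beating the entropy $\ln\binom{n}{m}=O(n\ln\ln n/\ln^2 n)=o(n)$ coming from the union bound. Once this is verified, a union bound over all $S$ of size exactly $m$ handles all sets of size at least $m$ (since any larger set contains one of size $m$), completing part~(2). Since both statements are attributed to~\cite[Theorem 7.1, Lemma 7.13]{Janson}, I would in practice cite those directly, but the moment-method skeleton above is the self-contained route.
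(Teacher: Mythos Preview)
Your approach is correct and is precisely the standard argument; note, however, that the paper does not actually prove Claim~\ref{cl:cliques} at all --- it simply quotes it as a known fact and cites \cite[Theorem~7.1, Lemma~7.13]{Janson}. So there is no ``paper's own proof'' to compare with; what you have sketched is essentially the proof found in that reference (first moment for part~(1), Janson's inequality plus a union bound over all $m$-sets for part~(2)).

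One small technical correction worth flagging: in the relevant regime (with $k_-$ only $O(\log_{1/p}\log n)$ below $2\log_{1/p}m$), the correlation sum $\Delta$ is in fact \emph{larger} than $\mu=\EXP Y_S$, so the basic Janson bound $\PR(Y_S=0)\le\exp(-\mu+\Delta/2)$ that you wrote is vacuous here. You need the extended Janson inequality $\PR(Y_S=0)\le\exp\bigl(-\mu^2/(2\bar\Delta)\bigr)$, which you did mention in passing. The dominant term in $\Delta/\mu^2$ comes from overlap $i=2$ and gives $\Delta/\mu^2=O(k^4/(m^2p))$, so the exponent $\mu^2/(2\bar\Delta)=\Omega(m^2/\mathrm{polylog}\,n)=n^{2-o(1)}$, which comfortably beats the entropy $\ln\binom{n}{m}=O(n\ln\ln n/\ln^2 n)$ from the union bound. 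With that adjustment your outline goes through.
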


Also, for technical reasons, we need to estimate the probability that a fixed vertex sends $s=(\gamma+o(1))\log_{1/p} n$ edges to a fixed set of size about $(2+o(1))\log_{1/p}n$. Note that, as follows from the claim below, $\gamma$ is defined in \eqref{eq:gamma_def} in such a way that $\mathbb{P}[\mathrm{Bin}(k,p)=s]=n^{-1+o(1)}$.

\begin{claim}
Let $x\neq\frac{1}{2(1-\log_{1/p}2/\gamma)}$ be an arbitrary (not necessarily positive) real number, 
$$
k=2\log_{1/p}n-x\log_{1/p}\log n +O(1),\quad s=\gamma\log_{1/p}n-x\log_{1/p}\log n +O(1)
$$
be integers. Then 
$$
 \mathbb{P}[\mathrm{Bin}(k,p)=s]=\frac{1}{n}\exp\biggl[\ln\ln n\left(x\left(1-\log_{1/p}\frac{2}{\gamma}\right)-\frac{1}{2}\right)(1+o(1))\biggr].
$$
\label{cl:main_technical_inequality}
\end{claim}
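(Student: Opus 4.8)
The plan is to evaluate $\mathbb{P}[\mathrm{Bin}(k,p)=s]=\binom{k}{s}p^s(1-p)^{k-s}$ exactly via Stirling's formula and then track the contributions down to the scale of $\ln\ln n$. Writing $H(q)=-q\ln q-(1-q)\ln(1-q)$ for the (natural) binary entropy, Stirling gives
$$
\ln\binom{k}{s}=kH(s/k)+\tfrac12\ln\frac{k}{2\pi s(k-s)}+o(1),
$$
so that, setting $G(k,s):=kH(s/k)+s\ln p+(k-s)\ln(1-p)$,
$$
\ln\mathbb{P}[\mathrm{Bin}(k,p)=s]=G(k,s)+\tfrac12\ln\frac{k}{2\pi s(k-s)}+o(1).
$$
I would treat $G$ as a smooth function of two real variables and expand it around the base point $(k_0,s_0):=(2L,\gamma L)$, where $L:=\log_{1/p}n$, since the actual $(k,s)$ differ from $(k_0,s_0)$ only by $-x\log_{1/p}\log n+O(1)$ in each coordinate.

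First I would show $G(k_0,s_0)=-\ln n$ exactly; this is the step where the definition of $\gamma$ enters. Indeed, $H(\gamma/2)=\tfrac12[\gamma\ln\frac2\gamma+(2-\gamma)\ln\frac{2}{2-\gamma}]$, so $k_0H(s_0/k_0)=L[\gamma\ln\frac2\gamma+(2-\gamma)\ln\frac{2}{2-\gamma}]$ and $s_0\ln p+(k_0-s_0)\ln(1-p)=L[\gamma\ln p+(2-\gamma)\ln(1-p)]$. Rewriting \eqref{eq:gamma_def} as $\gamma\ln\frac2\gamma+(2-\gamma)\ln\frac{2}{2-\gamma}=(1-\gamma)\ln p-(2-\gamma)\ln(1-p)$ and substituting, the bracket collapses to $\ln p$, whence $G(k_0,s_0)=L\ln p=-\ln n$.

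Next I would compute the first-order term. A direct differentiation gives $\partial_kG=\ln\frac{k(1-p)}{k-s}$ and $\partial_sG=\ln\frac{(k-s)p}{s(1-p)}$; at $(k_0,s_0)$ these are $O(1)$ and their sum telescopes to $\ln\frac{2p}{\gamma}$. Since both coordinate increments equal $-x\log_{1/p}\log n+O(1)$, the linear part of $G(k,s)-G(k_0,s_0)$ is $-x\log_{1/p}\log n\cdot\ln\frac{2p}{\gamma}+O(1)$, and using $\log_{1/p}\frac{2p}{\gamma}=\log_{1/p}\frac2\gamma-1$ together with $\ln\log n=(1+o(1))\ln\ln n$ this equals $x\ln\ln n\,(1-\log_{1/p}\frac2\gamma)(1+o(1))$. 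The Hessian entries of $G$ are $O(1/L)=O(1/\ln n)$ while the increments are $O(\ln\ln n)$, so the quadratic (and higher) remainder is $O((\ln\ln n)^2/\ln n)=o(\ln\ln n)$; the $O(1)$ slack from rounding in the definitions of $k,s$ is likewise absorbed. Finally the Stirling polynomial correction is $\tfrac12\ln\frac{k}{2\pi s(k-s)}=-\tfrac12\ln L+O(1)=-\tfrac12\ln\ln n+O(1)$, because $s(k-s)/k\asymp L$.

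Adding the three contributions yields $\ln\mathbb{P}[\mathrm{Bin}(k,p)=s]=-\ln n+\ln\ln n\,\big[x(1-\log_{1/p}\frac2\gamma)-\tfrac12\big](1+o(1))$, which is the claim after exponentiation; the hypothesis $x\neq\frac{1}{2(1-\log_{1/p}2/\gamma)}$ is exactly what makes the bracket nonzero (note $1-\log_{1/p}\frac2\gamma\neq0$ since $\gamma\neq 2p$), so that the factor $(1+o(1))$ multiplies a nonvanishing quantity. I expect the main obstacle to be the bookkeeping rather than any single hard estimate: one must be sure that every term discarded as $O(1)$ or as a second-order remainder is genuinely $o(\ln\ln n)$, and that the exact cancellation producing $G(k_0,s_0)=-\ln n$ and the telescoping $\partial_kG+\partial_sG=\ln\frac{2p}{\gamma}$ are carried out without sign errors, since these two algebraic identities are what make the stated coefficients come out clean.
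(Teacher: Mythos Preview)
Your proof is correct and follows essentially the same route as the paper: Stirling's formula applied to $\binom{k}{s}$ followed by a first-order expansion around the point $(2\log_{1/p}n,\gamma\log_{1/p}n)$ singled out by \eqref{eq:gamma_def}, with the polynomial Stirling correction supplying the $-\tfrac12\ln\ln n$ term. The only cosmetic difference is organization---you expand $G(k,s)$ as a function of two real variables via its gradient, while the paper substitutes a single parameter $\varepsilon=x\frac{\ln\ln n}{\ln n}$ and expands---but the computations are the same.
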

\begin{proof}
Set $\varepsilon=x\frac{\ln\ln n}{\ln n}$. Then
\begin{align*}
{k\choose s} & =\Theta\left(\frac{1}{\sqrt{\ln n}}\left(\frac{k^k}{s^s(k-s)^{k-s}}\right)\right)
 =\Theta\left(\frac{1}{\sqrt{\ln n}}\left(\frac{k}{s}\right)^s\left(\frac{k}{k-s}\right)^{k-s}\right)\notag\\
 &=
 \Theta\left(\frac{1}{\sqrt{\ln n}}\left(\frac{2-\varepsilon}{\gamma-\varepsilon}\right)^{(\gamma-\varepsilon)\log_{1/p} n}\left(\frac{2-\varepsilon}{2-\gamma}\right)^{(2-\gamma)\log_{1/p} n}\right)\notag\\
 &=\exp\left[\log_{1/p} n\left((\gamma-\varepsilon)\ln\frac{2-\varepsilon}{\gamma-\varepsilon}+(2-\gamma)\ln\frac{2-\varepsilon}{2-\gamma}\right)-\frac{1}{2}\ln\ln n(1+o(1)))\right].
 \end{align*}
 Since
\begin{multline*}
(\gamma-\varepsilon)\ln\frac{2-\varepsilon}{\gamma-\varepsilon}+(2-\gamma)\ln\frac{2-\varepsilon}{2-\gamma}=\\
(\gamma-\varepsilon)\ln\frac{2}{\gamma}+(\gamma-\varepsilon)\ln\left(1-\frac{\varepsilon}{2}\right)-(\gamma-\varepsilon)\ln\left(1-\frac{\varepsilon}{\gamma}\right)+(2-\gamma)\ln\frac{2}{2-\gamma}+(2-\gamma)\ln\left(1-\frac{\varepsilon}{2}\right)\\
=\gamma\ln\frac{2}{\gamma}+(2-\gamma)\ln\frac{2}{2-\gamma}-\varepsilon\ln\frac{2}{\gamma}+O(\varepsilon^2),
\end{multline*}
we get that 
 \begin{align*}
  {k\choose s} &=\exp\left[\log_{1/p} n\left(\gamma\ln\frac{2}{\gamma}+(2-\gamma)\ln\frac{2}{2-\gamma}-\varepsilon\ln\frac{2}{\gamma}\right)-\frac{1}{2}\ln\ln n(1+o(1))\right]\notag\\
    &\stackrel{\eqref{eq:gamma_def}}=\exp\left[\log_{1/p} n\left((2-\gamma)\ln\frac{1}{1-p}-(1-\gamma)\ln\frac{1}{p}-\varepsilon\ln\frac{2}{\gamma}\right)-\frac{1}{2}\ln\ln n(1+o(1))\right].\notag\\
\end{align*}
Thus,
\begin{align*}
\mathbb{P}[\mathrm{Bin}(k,p)=s]&= {k\choose s}p^s(1-p)^{k-s}\\
 &=\exp\biggl[\ln n\left((2-\gamma)\log_{1/p}\frac{1}{1-p}+\gamma-\varepsilon\log_{1/p}\frac{2}{\gamma}-1\right)\\
 &\quad\quad\quad\quad\quad
 -s\ln\frac{1}{p}-(k-s)\ln\frac{1}{1-p}-\left(\frac{1}{2}+o(1)\right)\ln\ln n\biggr]\\
 &=\frac{1}{n}\exp\biggl[\varepsilon\left(1-\log_{1/p}\frac{2}{\gamma}\right)\ln n-\frac{1}{2}\ln\ln n(1+o(1))\biggr],
\end{align*}
as needed.
\end{proof}

\begin{remark}
In order to justify the choice of the constant factor in the second order term in the definition of $k_-$, let us observe that \eqref{eq:gamma_def} implies
$$
\log_{1/p}\frac{2}{\gamma}=1-\frac{1}{\gamma}-\frac{2-\gamma}{\gamma}\log_{1/p}\frac{2(1-p)}{2-\gamma}<\frac{1}{2}
$$
since $\gamma<2$ and $\frac{2(1-p)}{2-\gamma}>1$. Thus, we get that there exists $\varepsilon=\varepsilon(p)>0$ such that
\begin{align*}
\mathbb{P}[\mathrm{Bin}(k,p)=s]\geq\frac{1}{n}\exp[\ln\ln n(x+\varepsilon-1-o(1))/2],\quad\text{if }x>0,\\
\mathbb{P}[\mathrm{Bin}(k,p)=s]\leq\frac{1}{n}\exp[\ln\ln n(x-\varepsilon-1-o(1))/2],\quad\text{if }x<0.
\end{align*}
\label{rk:choice_of_C}
\end{remark}

\subsection{Lower bound}
\label{sc:proof_dense_lower}

We let $n'= \lfloor n/\ln n\rfloor$ and $k=k_-(n',p)$. Note that $k=(2-o(1))\log_{1/p}n$. We then divide $[n]$ into two parts $V\sqcup U$, where $V$ has size $n'$. Expose first the edges of $G_n$ spanned by $V$. Due to Claim~\ref{cl:cliques}, whp there exists a set $V_0\subset V$ of size at most $n/\ln^2 n$ such that $G(n,p)[V\setminus V_0]$ can be partitioned into cliques of size $k$. Let $K_1,\ldots,K_m$ be such cliques, where 
$$
m=(1+o(1))\frac{n}{2\ln n\log_{1/p}n}.
$$

Let us now expose edges of $G_n$ between $V$ and $U$. Let $s$ be the maximum integer such that 
$$
\mathbb{P}[\mathrm{Bin}(k,p)\geq s]\geq \mathbb{P}[\mathrm{Bin}(k,p)=s]\geq\frac{\ln^4 n}{n}.
$$
Then a vertex $u\in U$ has at least $s$ neighbours in some $K_j$, $j\in[m]$, with probability at least 
$$
1-\left(1-\frac{\ln^4 n}{n}\right)^m=1- e^{-\Omega(\ln^2n)}=1-o(1/n).
$$
By the union bound, whp, for every vertex $u\in U$, there is $j\in[m]$ such that $u$ has at least $s$ neighbours in $K_j$.

Consider a subgraph $H$ of $G_n$ obtained from the disjoint union of cliques $K_1,\ldots,K_m$ with vertices from $U$, each sending $s$ edges to exactly one of the cliques. This graph has 
$$
m{k\choose 2}+s|U|\geq\gamma n\log_{1/p}n-9n\log_{1/p}\log n+O(n)
$$ 
edges due to Claim~\ref{cl:main_technical_inequality}. Indeed, letting $x=9$, we get from Claim~\ref{cl:main_technical_inequality}, Remark~\ref{rk:choice_of_C}, and the definition of $s$ that $s>\gamma\log_{1/p}n-9\log_{1/p}\log n +O(1)$.

It remains to prove that $H$ is chordal. Consider any ordering of vertices of $H$, where each vertex of $V$ precedes any vertex of $U$. Obviously, this is a perfect elimination ordering implying that $H$ is chordal.

\subsection{Upper bound}
\label{sc:ub}


Let $k=k_+(n,p)$. Letting 
$$
s=\gamma\log_{1/p}n+3\log_{1/p}\log n +O(1)
$$
to be an integer, we get that 
\begin{equation}
\mathbb{P}[\mathrm{Bin}(k,p)=s]\leq\mathbb{P}[\mathrm{Bin}(\lfloor k+3\log_{1/p}\log n\rfloor,p)=s]\leq\frac{1}{n}\exp[-2\ln\ln n].
\label{eq:bin_upper}
\end{equation}
The first inequality holds true since $\gamma>2p$ and thus $\mathbb{P}[\mathrm{Bin}(y,p)=s]$ increases in $y$ on $[s,k+\delta(k)]$ for any choice of $\delta(k)=o(k)$: indeed 
$$
\frac{\mathbb{P}[\mathrm{Bin}(y+1,p)=s]}{\mathbb{P}[\mathrm{Bin}(y,p)=s]}=\frac{(y+1)(1-p)}{y+1-s}>
\frac{k(1+o(1))(1-p)}{k(1+o(1))-s}=\frac{2(1-p)+o(1)}{2-\gamma}>1.
$$
The second inequality in \eqref{eq:bin_upper} holds true due to Claim~\ref{cl:main_technical_inequality} and Remark~\ref{rk:choice_of_C}. Let 
$$
\ell=sn=\gamma n\log_{1/p}n+3n\log_{1/p}\log n +O(n).
$$
We will prove that whp the maximum number of edges in a chordal subgraph of $G_n$ is less than $\ell$, and this immediately implies the desired upper bound.\\ 






Let $H$ be a chordal graph, and assume we are given a perfect elimination ordering $v_1\prec\ldots\prec v_n$ of the vertices of $H$. For every vertex $v_i$, let $d(v_i)$ be the {\it outdegree} of $v_i$, i.e. the number of outgoing neighbours of $v_i$, and let $K_{v_i}$ be the clique induced by $v_i$ and its outgoing neighbours. For every $i$ such that $d(v_i)>0$, let $\nu(v_i)$ be the last outgoing neighbour of $v_i$ in the given perfect elimination ordering.
   Consider a graph $T_{\prec}(H)$ on the vertex set $V(H)$ consisting of all edges $\{v_i,\nu(v_i)\}$. Note that $T_{\prec}(H)$ is 1-degenerate, thus it is a forest. 

We further assume that $H$ is connected. In this case, for any perfect elimination ordering $\prec$, we have that $T=T_{\prec}(H)$ is a tree. Indeed, it is sufficient to show that, for every pair of vertices $v_j\prec v_i$ forming an edge of $H$, there is a path in $T$ from $v_i$ to $v_j$. If $\nu(v_i)=v_j$, then $\{v_j,v_i\}$ is an edge of $T$ itself. Otherwise, $v_j\prec\nu(v_i)\prec v_i$, $\{v_i,\nu(v_i)\}$ is an edge of both $T$ and $H$, and thus $\{v_j,\nu(v_i)\}$ is an edge of $H$.  By induction, we eventually will get a path from $v_i$ to $v_j$ in $T$. Let us call $T$ a {\it perfect elimination tree of $H$}, and let $v_1$ be the root of $T$. Note that any layer-preserving ordering of the vertices of $T$ (i.e. vertices that are further from the root in $T$ occur later in this ordering) is a perfect elimination ordering of $H$.

For any rooted tree $T$ on $[n]$ there is at least one connected chordal graph $H$ such that $T$ is its perfect elimination tree (actually we may take $H=T$). 
 In order to recover an $H$ (uniquely) from $T$, we also equip $T$ with additional data: assume that $v_1\prec\ldots\prec v_n$ is a layer-preserving ordering of the vertices of $T$ and assign to each vertex $v_i$, $i\geq 2$, a vector $\mathbf{e}_i\in\{0,1\}^{k_i}$ that encodes the outgoing neighbourhood of the vertex $i$ in $H$, where $k_i=|K_{\nu(v_i)}|-1$ (see Fig.~1). Thus, in $H$, the vertex $i$ is adjacent to a vertex $j<\nu(v_i)$ if and only if $j\in K_{\nu(v_i)}$ and the respective coordinate of $\mathbf{e}_i$ equals 1. 
 Note that $k_2=0$, and, for every $i\geq 3$, vectors $\mathbf{e}_j$, $j\leq i-1$, define $k_i$ uniquely.\\
 
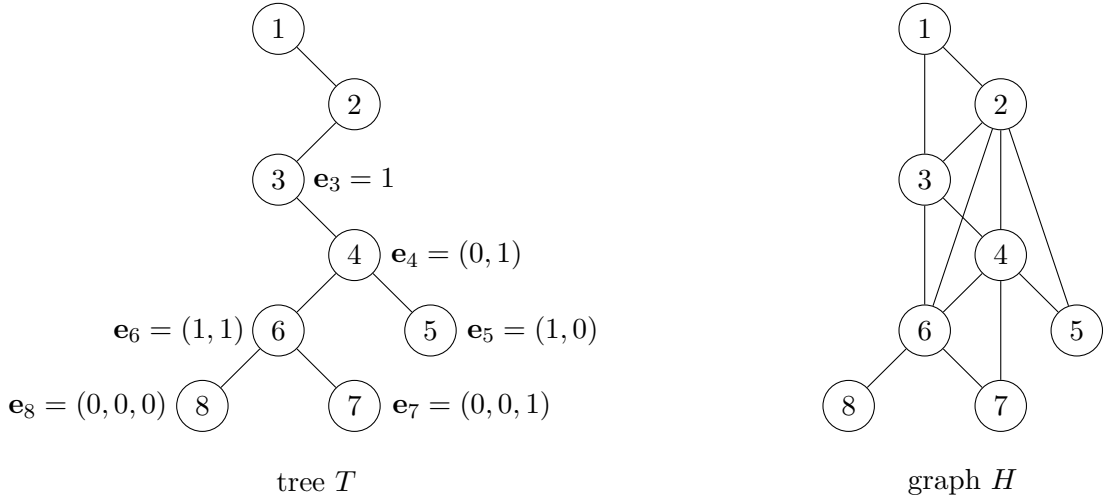
\begin{figure}
\begin{centering}
\begin{tikzpicture}[main/.style = {node distance={15mm}, draw, circle}] \small
\label{fig:1}
\node[main] at (-1, 2)  (a)  {1};

\node[main] at (0, 1)   (b) {2};

\node[main] at (-1, 0)   (c) {3};

\node[main] at (0, -1)  (d) {4};

\node[main] at (1, -2)  (e) {5};

\node[main] at (-1, -2)  (f) {6};

\node[main] at (0, -3)  (g) {7};

\node[main] at (-2, -3)  (h) {8};



\draw[black] (a) -- (b);
\draw[black] (b) -- (c);
\draw[black] (c) -- (d);
\draw[black] (d) -- (e);
\draw[black] (d) -- (f);
\draw[black] (f) -- (g);
\draw[black] (f) -- (h);

\node[main] at (7.5, 2)  (1)  {1};

\node[main] at (8.5, 1)   (2) {2};

\node[main] at (7.5, 0)   (3) {3};

\node[main] at (8.5, -1)  (4) {4};

\node[main] at (9.5, -2)  (5) {5};

\node[main] at (7.5, -2)  (6) {6};

\node[main] at (8.5, -3)  (7) {7};

\node[main] at (6.5, -3)  (8) {8};


\node[] at (0, 0) (333) {$\mathbf{e}_3=1$}; 
\node[] at (1.35, -1) (444) {$\mathbf{e}_4=(0,1)$}; 
\node[] at (2.35, -2) (555) {$\mathbf{e}_5=(1,0)$}; 
\node[] at (-2.3, -2) (666) {$\mathbf{e}_6=(1,1)$}; 
\node[] at (1.55, -3) (777) {$\mathbf{e}_7=(0,0,1)$}; 
\node[] at (-3.5, -3) (888) {$\mathbf{e}_8=(0,0,0)$};

\draw[black] (1) -- (2);
\draw[black] (2) -- (3);
\draw[black] (1) -- (3);
\draw[black] (3) -- (4);
\draw[black] (2) -- (4);
\draw[black] (4) -- (5);
\draw[black] (2) -- (5);
\draw[black] (4) -- (6);
\draw[black] (2) -- (6);
\draw[black] (3) -- (6);
\draw[black] (6) -- (7);
\draw[black] (4) -- (7);
\draw[black] (6) -- (8);

\node[] at (-0.5, -4) (222) {tree $T$}; 
\node[] at (8, -4) (222) {graph $H$};

\end{tikzpicture} 
\caption{Reconstruction of $H$ from $T$.}
\end{centering}
\end{figure}

We can now estimate the expected number of rooted trees $T$ on $[n]$ such that 
 in $G_n$ there exists a {\it connected} chordal subgraph $H$ with the following properties:
\begin{itemize}
\item $H$ has at least $\ell$ edges,
\item $H$ does not have cliques of size $k$,
\item$T$ is a perfect elimination tree of $H$.
\end{itemize}
For our goal, it is sufficient to prove that this expectation approaches 0 due to Claim~\ref{cl:cliques}.1. In particular, since whp $G_n$ is connected, a chordal subgraph with the maximum number of edges is also connected (any disconnected chordal subgraph on $[n]$ can be supplemented by an edge between any pair of connected components), thus the connectivity assumption does not cause a loss of generality. There are $n^{n-1}$ ways to construct a rooted tree $T$. Take a rooted tree $T$ and consider any layer-preserving ordering $\prec$. Without loss of generality, we assume that this ordering is defined by the identity permutation on $[n]$. 
 Thus, the desired expectation is bounded from above by $\rho n^{n-1}$, where $\rho$ is the maximum (over $T$) probability that $G_n$ has a chordal subgraph $H$ with $T=T_{\prec}(H)$, at least $\ell$ edges, and without cliques of size $k$. Let us expose the edges of $G_n$ in the following order that $\prec$ induces on the pairs $u<v$ of vertices in $[n]$: $(u,v)<(u',v+1)$ for any $u'$, and $(u,v)<(u+1,v)$. For any $v\geq 2$, as soon as $G_n[\leq v-1]$ is exposed, the outdegree of $v$ to the clique $K_{\nu(v)}$ (note that $\nu(v)$ is defined by $T$) has binomial distribution with $k_{\nu}(v)+1$ trials, and $k_{\nu}(v)+1$ is uniquely defined by $G_n[\leq v-1]$. 
  Let us also recall that each $k_{\nu(v)}+1$ should be at most $k$. We then get that $d(2)+\ldots+d(n)$ is stochastically dominated by the sum of $n-1$ independent $\mathrm{Bin}(k,p)$ random variables implying
\begin{align*}
 \rho\leq\mathbb{P}[d(2)+\ldots+d(n)\geq\ell]\leq\mathbb{P}[\mathrm{Bin}(kn,p)\geq\ell]&\stackrel{(*)}\leq
 kn\mathbb{P}[\mathrm{Bin}(kn,p)=\ell]\\
 &=  kn{kn\choose\ell}p^{\ell}(1-p)^{kn-\ell}.
\end{align*}
The inequality (*) holds since $\ell>(1+\varepsilon)knp$ for a sufficiently small constant $\varepsilon>0$ (that follows immediately from $p<\gamma/2$), and so ${kn\choose x}\left(\frac{p}{1-p}\right)^x$ decreases on $[\ell,kn]$ (see, e.g.,~\cite[Chapter 1.2]{Bollobas}). Let us note that, for all $n$ large enough,
\begin{align*}
 {kn\choose\ell}={kn\choose sn} &=\frac{(1+o(1))\sqrt{k}}{\sqrt{2\pi s(k-s)n}}\cdot\frac{(kn)^{kn}}{(sn)^{sn}(kn-sn)^{n(k-s)}}\\
 & \leq\left(\frac{k^k}{s^s(k-s)^{k-s}}\right)^n \\
 &=\left((1+o(1))\frac{\sqrt{2\pi s(k-s)}}{\sqrt{k}}{k\choose s}\right)^n
  \leq\left({k\choose s}\ln n\right)^n
\end{align*}
implying that
$$
\rho\leq kn(\ln n)^n\biggl(\mathbb{P}[\mathrm{Bin}(k,p)=s]\biggr)^n.
$$ 
 
Thus, the desired expectation is upper bounded by
$$
 k (n\ln n)^n \biggl(\mathbb{P}[\mathrm{Bin}(k,p)=s]\biggr)^n\leq
 k\exp[-n\ln\ln n]=o(1).
$$
The last inequality follows from (\ref{eq:bin_upper}), completing the proof.


\section{Efficient search for large chordal graphs}
\label{sc:complexity}

Chordality of graphs can be efficiently tested, and a perfect elimination ordering of vertices of a chordal graph can be found in linear time~\cite{RTL}. Thus, if with bounded away from 0 probability it is possible to find in $G_n$ a chordal graph of size at least $(1+\varepsilon)n\log_{1/p}n$ in polynomial time, then it is also possible to find a clique of size $(1+\varepsilon)\log_{1/p}n$ in polynomial time, but the latter is an open problem that has received a lot of attention in the past (see, e.g.,~\cite{AKS, DGP, JP, Kucera}). We show that a chordal graph of size $(1+o(1))n\log_{1/p}n$ can be found in $G_n$ in polynomial time whp which is, as observed, asymptotically tight unless the mentioned large clique search problem can be efficiently solved.\\

Let $m=\lfloor \ln^2 n\rfloor$, $k=\lfloor\log_{1/p}n-4\log_{1/p}\ln n\rfloor$. For brevity, we call a path consisting of $m$ vertices, an {\it $m$-path}.

\begin{claim}
There is a polynomial-time algorithm that finds in $G_n$ a union of $\lfloor n/m\rfloor$ disjoint $k$-th powers of $m$-paths whp.
\end{claim}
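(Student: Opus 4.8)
The plan is to realise the required union by a round-robin greedy procedure in which each round is reduced to a bipartite matching. First observe that the target graph is chordal and has the right size: if $P_m^k$ is the $k$-th power of a path $u_1,\dots,u_m$, then $u_1\prec\dots\prec u_m$ is a perfect elimination ordering, since the outgoing neighbours of $u_i$ are exactly $u_{\max(1,i-k)},\dots,u_{i-1}$, which are pairwise within distance $k$ and hence form a clique. Its number of edges is $mk-\binom{k+1}{2}=(1-o(1))mk$, so a union of $M:=\lfloor n/m\rfloor$ disjoint copies has $(1-o(1))\,M\,m\,k=(1-o(1))nk=(1-o(1))n\log_{1/p}n$ edges, which is the point of the construction.

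The algorithm maintains $M$ partially built paths and grows them over $m$ rounds, appending one vertex to each path per round from the set $R$ of currently unused vertices. In round $1$ we pick $M$ distinct vertices. In round $j\ge 2$, for each path $i$ let $W_i$ be the set of its last $\min(j-1,k)$ vertices; by construction $W_i$ is a clique, and to extend path $i$ correctly the appended vertex of $R$ must be adjacent in $G_n$ to all of $W_i$. Extending all paths at once is therefore exactly the problem of finding a matching saturating the left part in the bipartite graph $B_j$ on parts $\{1,\dots,M\}$ and $R$, where $i\sim v$ iff $v\in R$ is adjacent to every vertex of $W_i$. Bipartite matchings are computable in polynomial time, so if every $B_j$ has such a matching the whole procedure runs in polynomial time.

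For the analysis recall that $p^{k}=\Theta((\ln n)^{4}/n)$, so a fixed $k$-clique has about $|R|\,p^{k}$ common neighbours in $R$. Extending all paths in round $j$ is the matching task above, and whp Hall's condition holds with enormous slack while $|R|$ is large: when $|R|\approx n$ each clique has $\approx(\ln n)^4$ candidates, and the union of neighbourhoods of any $\Theta(M)$ cliques already fills almost all of $R$. The tight regime is the last few rounds, the extreme being the final one, where the committed vertices number $M(m-1)$, so $|R|=(n-Mm)+M\approx M=n/\ln^2 n$, each size-$k$ window clique has expected common-neighbour degree $|R|p^{k}\approx(\ln n)^{2}$ into $R$, and we must saturate all $M$ cliques using a pool of essentially the same size.

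The main obstacle is to establish the perfect matching in this final regime. Here the parameters are calibrated exactly: the slack $4\log_{1/p}\ln n$ in $k$ makes the final-round degree $\Theta((\ln n)^{2})$, comfortably above the threshold $\Theta(\ln n)$ at which a bipartite graph on about $M+M$ vertices acquires a perfect matching, so heuristically the matching should exist. Turning this into a proof has two parts: (a) the Hall/expansion estimate, which is easy for small sets $S$ of paths (each clique has degree $\Theta((\ln n)^2)$) and for very large $S$ (the union of neighbourhoods nearly exhausts $R$), with the intermediate range handled by a routine expansion argument; and (b) the genuinely delicate point that the edges of $B_j$ are correlated with the algorithm's earlier decisions. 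For (b) I would expose the edges of $G_n$ in a controlled order so that the clique-to-surviving-pool adjacencies stay unconditioned until round $j$, or else replace the adaptive analysis by a single deterministic pseudorandom property of $G_n$; the latter must avoid a naive union bound over all admissible pools $R$ (which would cost a factor $2^{n}$) by exploiting that the pools arising in the algorithm form a nested decreasing family fixed by the earlier rounds rather than ranging over all subsets. Controlling this dependency across the last $\Theta(n/(\ln n)^4)$ placements, where the pool is no longer much larger than $p^{-k}$, is where essentially all of the work lies.
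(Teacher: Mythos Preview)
Your round-robin matching scheme is the right shape, but the way you feed it vertices creates exactly the dependency you flag in~(b), and you do not actually resolve it. In each round you draw the new vertices from a \emph{single adaptive pool} $R$ of all currently unused vertices; the composition of $R$ and the windows $W_i$ are both functions of the outcomes of earlier matchings, so the adjacencies in $B_j$ are conditioned on those outcomes. Your suggested remedies are not fleshed out: ``exposing edges in a controlled order'' does not obviously help here, because an edge $\{u,v\}$ with $v\in W_i$ and $u\in R$ was already implicitly queried in every round after $v$ was placed (whenever $v$ sat in a window and $u$ sat in the pool), and a ``single pseudorandom property'' strong enough to certify Hall's condition for the particular, history-dependent pool $R$ at each of the $m$ rounds is precisely what you say is the hard part. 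As it stands the proof is a plan, not a proof.

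The paper eliminates this difficulty by a simple device: fix in advance a balanced partition $[m\,n']=V_1\sqcup\cdots\sqcup V_m$ with $n'=\lfloor n/m\rfloor$, and in round $i$ match the $n'$ windows against the \emph{predetermined} set $V_i$ rather than against an adaptive pool. The orderings of $V_1,\dots,V_{i-1}$ produced by earlier rounds depend only on edges inside $V_1\cup\cdots\cup V_{i-1}$, so the edges between $V_i$ and the windows are still unconditioned; the auxiliary bipartite graph $H_i$ on parts of equal size $n'$ is therefore a genuine binomial bipartite random graph with edge probability at least $p^{k}\ge (\ln n)^{4}/n$. Since $n'\sim n/(\ln n)^{2}$, this is far above the perfect-matching threshold for $H_i$, so a perfect matching exists with probability $1-o(1/n)$, and a union bound over the $m=O((\ln n)^{2})$ rounds finishes the argument. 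The pre-partitioning is the missing idea: it keeps each round's randomness fresh and reduces the whole analysis to a standard threshold fact, with no Hall-type case analysis and no handling of adaptive correlations.
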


\begin{proof}
We use an algorithm proposed by Alon and F\"{u}redi~\cite{AF}. Let $n'=\lfloor n/m\rfloor$. Consider any balanced partition $[mn']=V_1\sqcup\ldots\sqcup V_m$. We will order the vertices in every $V_i$ sequentially starting from $V_1$. The first set is ordered arbitrarily: $V_1=\{v^1_1,\ldots,v^1_{n'}\}$. Let $i\in\{2,\ldots,m\}$, and assume that, for every $i'\leq i-1,$ $V_{i'}=\{v^{i'}_1,\ldots,v^{i'}_{n'}\}$ is already ordered in such a way that, for every $j\in[n']$, the vertex $v^{i'}_{j}$ is adjacent to each of $v^{\max\{1,i'-k\}}_{j},\ldots,v^{i'-1}_{j}$. Consider an auxiliary bipartite graph $H_i$ with equal parts $V_i=\{v_1,\ldots,v_{n'}\}$ (the order is initially arbitrary), $U_i$, where the $j$-th vertex of $U_i$ is 
$$
\mathbf{u}^i_j=\left\{v^{\max\{1,i-k\}}_j,\ldots,v^{i-1}_j\right\}.
$$
We draw an edge between $v_{j'}$ and $\mathbf{u}^i_j$ if and only if $v_{j'}$ is adjacent to each of $v^{\max\{1,i-k\}}_j,\ldots,v^{i-1}_j$ in $G_n$. Note that, if we find a perfect matching $M_i$ (which is known to be solvable in polynomial time~\cite{MV}) in $H_i$, then we get the desired ordering of $V_i$ and, eventually, $\cup_i M_i$ corresponds to a union of $k$-th powers of $m$-paths in $G_n$ as needed.

Observe that $H_i$ is a binomial bipartite graph with edges appearing with probability at least $p^k\geq\frac{\ln^4 n}{n}$. Thus, with probability $1-o(1/n)$ it has a perfect matching (see, e.g.~\cite[Corollary 7.13]{Bollobas}). By the union bound, every $H_i$, $i\in[m]$, contains a perfect matching $M_i$ and we can order every $V_i$ in the desired way. \end{proof}

It remains to notice that a union of $\lfloor n/m\rfloor$ disjoint $k$-th powers of $m$-paths is a chordal graph with more than $\lfloor n/m\rfloor\left(m-k\right)k=kn(1-o(1))$ edges.

\section{Sparse random graphs: proof of Theorem~\ref{th:main_2}}
\label{sc:sparse}

For a sequence of graphs $G_n$ on $[n]$ and a fixed graph $F$, an {\it almost $F$-tiling} of $G_n$ is a sequence of subgraph $H_n\subset G_n$ on $n(1-o(1))$ vertices formed by disjoint unions of graphs isomorphic to $F$. We need to recall the result of Ruci\'{n}ski~\cite{Rucinski} about the threshold for the existence of almost tilings in the random graph. Let us recall that the {\it 1-density} of $F$ is $\rho(F)=\frac{|E(F)|}{|V(F)|-1}$. Let us call $F$ {\it strictly 1-balanced}, if every proper subgraph of $F$ has 1-density strictly less than $\rho(F)$. The {\it maximum 1-density} of a graph $F$ is the maximum 1-density over all its subgraphs: $\rho^*(F)=\max_{H\subseteq F}\rho(H)$.

\begin{thm}[Ruci\'{n}ski, 1992~\cite{Rucinski}]
Let $F$ be a graph with a maximum 1-density $\rho^*$ and $p\gg n^{-1/\rho^*}$. Then whp $G(n,p)$ has an almost $F$-tiling.
\label{th:Rucinski}
\end{thm}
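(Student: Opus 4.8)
The plan is to prove only the positive (existence) direction and to recast it as a matching problem. First I would introduce the $v$-uniform hypergraph $\mathcal{H}$ on vertex set $[n]$, where $v=|V(F)|$ and the hyperedges are exactly those $v$-element sets of vertices that span a copy of $F$ in $G_n$ (a fixed $v$-set hosts at most a bounded number of copies, so this loses only constant factors). A family of vertex-disjoint copies of $F$ is then precisely a matching in $\mathcal{H}$, and an almost $F$-tiling is a matching covering $(1-o(1))n$ vertices, i.e. an almost-perfect matching. To produce one I would invoke the R\"{o}dl nibble in the form of the Pippenger--Spencer / Frankl--R\"{o}dl theorem, which yields an almost-perfect matching in any nearly regular uniform hypergraph with small codegrees. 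Thus it suffices to verify that (i) all but $o(n)$ vertices lie in $(1+o(1))D$ hyperedges for a common $D$, and (ii) every pair of vertices lies in $o(D)$ hyperedges.

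For the degree count, write $e=|E(F)|$ and let $D=\mathbb{E}[\deg_{\mathcal{H}}(w)]=\Theta(n^{v-1}p^{e})$ be the expected number of copies of $F$ through a fixed vertex $w$. Here is exactly where the hypothesis $p\gg n^{-1/\rho^*}$ enters: it controls the count of copies of \emph{every} subgraph simultaneously. For any $H\subseteq F$ one has $\rho(H)\le\rho^*$, hence $p\gg n^{-1/\rho^*}\ge n^{-1/\rho(H)}=n^{-(|V(H)|-1)/|E(H)|}$, so the expected number of copies of $H$ through a fixed vertex, namely $\Theta(n^{|V(H)|-1}p^{|E(H)|})$, tends to infinity. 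In particular $D\to\infty$. Using these growing extension counts I would apply Janson's inequality (or Kim--Vu polynomial concentration) to bound the probability that $\deg_{\mathcal{H}}(w)$ differs from $D$ by more than a $(1\pm o(1))$ factor by $o(1)$; a Markov argument over the choices of $w$ then shows that whp all but $o(n)$ vertices are $(1+o(1))D$-regular. The $o(n)$ exceptional vertices are simply discarded and allowed to stay uncovered, which is harmless for an \emph{almost} tiling.

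For the codegree, the number of copies of $F$ through a fixed pair $\{w,w'\}$ has expectation smaller than $D$ by a factor of order $n^{-1}$ (one fewer free vertex to place), hence is $o(D)$ in expectation, and a crude first-moment bound shows whp no pair lies in more than $o(D)$ hyperedges. With near-regularity and small codegrees established, the nibble produces a matching of $\mathcal{H}$ covering $(1-o(1))n$ vertices, which is exactly the desired almost $F$-tiling.

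I expect the degree-regularity step to be the main obstacle. Near the threshold, the number of copies of $F$ through a vertex is dominated by, and inherits the fluctuations of, the number of copies of the densest subgraph $H^*$ achieving $\rho^*$, and a naive second moment is too crude there. The saving feature is that $p\gg n^{-1/\rho^*}$ forces \emph{every} partial-extension count to grow polynomially, so the clustering term in Janson's inequality is of lower order and $(1+o(1))$-concentration holds for typical vertices. A secondary point worth flagging is that a purely greedy removal of copies of $F$ cannot replace the nibble here: greedy deletion may leave a linear number of uncovered vertices, whereas it is the semi-random method that drives the uncovered set down to $o(n)$.
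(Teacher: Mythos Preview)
The paper does not prove this statement: Theorem~4.1 is quoted as a black-box result from Ruci\'{n}ski (1992) and invoked without proof in the lower-bound arguments of Section~4. There is therefore no proof in the paper to compare yours against.

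Your nibble-based sketch is a legitimate modern route to the result, and the degree-regularity step is essentially right: the hypothesis $p\gg n^{-1/\rho^*}$ is equivalent to $n^{|V(H)|-1}p^{|E(H)|}\to\infty$ for every subgraph $H\subseteq F$, which is exactly the condition making $\mathrm{Var}(\deg_{\mathcal H}(w))=o(D^2)$, so Chebyshev already gives $(1+o(1))$-concentration for a typical vertex (note that Janson controls only the lower tail, so your parenthetical ``or Kim--Vu'' is the alternative actually doing the two-sided work). The codegree step, however, does not go through as written. A first-moment bound gives only
\[
\mathbb{P}\bigl(\mathrm{codeg}(w,w')>\varepsilon D\bigr)\le \frac{\mathbb{E}[\mathrm{codeg}(w,w')]}{\varepsilon D}=O\!\left(\frac{1}{\varepsilon n}\right),
\]
and after a union bound over $\binom{n}{2}$ pairs you are left with an expected $\Theta(n)$ bad pairs --- nowhere near enough to conclude that \emph{every} pair has codegree $o(D)$, which is what Pippenger--Spencer requires. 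Repairing this needs a genuine concentration input for rooted subgraph counts (Kim--Vu, or Spencer's extension bounds) to push the per-pair failure probability below $n^{-2}$, or alternatively an edge-deletion step that removes the offending hyperedges while preserving near-regularity. With that patched, the nibble does deliver the almost-perfect matching you want.
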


Let $p=n^{-\alpha+o(1)}$, for a constant $\alpha>0$. We consider two cases from the statement of Theorem~\ref{th:main_2} separately: in Section~\ref{sc:alpha>1/2} we denote by $k:=k_{\alpha}$ the largest integer such that $\alpha<\frac{1+k}{1+2k}$ and prove the theorem for $\alpha\in(1/2,1]$; in Section~\ref{sc:alpha<1/2}, we address the case $\alpha\in(0,1/2]$.

\subsection{$\alpha>1/2$}
\label{sc:alpha>1/2}



If $\frac{1}{n}\ll p\ll n^{-2/3}$, then whp $G_n$ has a connected component of size $n(1-o(1))$ (see, e.g.,~\cite{Bollobas,Janson}) implying that whp $X_n\geq n-o(n)$ (a tree that covers almost all vertices of $G_n$ is chordal). On the other hand, the expected number of triangles is $o(n)$, and the expected number of 4-cliques is $o(1)$ implying that whp there are $o(n)$ triangles and no 4-cliques by Markov's inequality. Then, whp for any chordal graph $H\subset G_n$ and its perfect elimination ordering, no vertices have outdegree 3 and $o(n)$ vertices have outdegree 2. Thus, whp $X_n\leq n+o(n)$. We eventually get that $X_n/n\stackrel{\mathbb{P}}\to 1$.\\ 


Other $\alpha>\frac{1}{2}$ can be handled similarly: let $n^{-\frac{1+k}{1+2k}}\ll p \ll n^{-\frac{2+k}{3+2k}}$ for some integer $k\geq 1$.  
 Let $H$ be the square of a path 
  of length $k+1$. Consider a graph $F=F(M)$ obtained by gluing sequentially $M$ copies of $H$: the first vertex of the $i$-th copy is glued with the last vertex of the $(i-1)$-th copy. Note that $F$ is chordal and its maximum 1-density equals $\rho^*(F)=\rho^*(H)=\frac{1+2k}{1+k}$. By Theorem~\ref{th:Rucinski}, whp there exists an almost $F$-tiling in $G_n$. In other words, whp there is a disjoint union of copies of $F$ (which is chordal) that cover $n(1-o(1))$ vertices of $G_n$. Clearly, for every $\varepsilon>0$, there exists $M$ such that
an almost $F$-tiling has at least $(\frac{1+2k}{1+k}-\varepsilon)n$ edges for $n$ large enough. Thus, whp $X_n/n\geq\frac{1+2k}{1+k}-o(1)$.\\

It remains to show that whp $X_n/n\leq\frac{1+2k}{1+k}+o(1)$. 
For every positive integer $i$, let $\mathcal{H}_i$ be the family of all chordal graphs on $0\sqcup[i]$ that can be obtained in the following way:
\begin{itemize} 
\item $0$ and $1$ are adjacent,
\item for every $j\in[i-1]$, the vertex $j+1$ is adjacent to at least 2 vertices in $0\sqcup [j]$,
\item for every $j\in[i-1]$, the neighbours of $j+1$ in $0\sqcup [j]$, form a clique.
\end{itemize}
Note that each graph from $\mathcal{H}_i$ has $i+1$ vertices and at least $2i-1$ edges. Set $\mathcal{H}=\sqcup_{i\geq 1}\mathcal{H}_i$. 

\begin{claim}
Every 2-connected chordal graph $H$ is isomorphic to a graph from $\mathcal{H}$.
\label{cl:2-connected_chordal}
\end{claim}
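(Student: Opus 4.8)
The plan is to argue by induction on $n=|V(H)|$, repeatedly deleting a \emph{simplicial} vertex (one whose neighbourhood induces a clique). Recall that a chordal graph always has a simplicial vertex: the last vertex $v_n$ of any perfect elimination ordering has all of its neighbours among $v_1,\ldots,v_{n-1}$, and these form a clique. Since $H$ is 2-connected, every vertex has degree at least $2$, so the neighbourhood $N(v)$ of a simplicial vertex $v$ is a clique of size at least $2$. This is exactly the data needed to realise $v$ as the last vertex in an $\mathcal{H}_i$-type construction, so the whole task reduces to showing that deleting $v$ keeps us inside the class of 2-connected chordal graphs.

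The crucial step is therefore the following lemma: \emph{if $v$ is simplicial in a 2-connected graph $H$ on $n\geq 4$ vertices, then $H-v$ is again 2-connected.} First, $H-v$ is connected, because any path through $v$ enters and leaves $v$ via two of its neighbours, which are adjacent (as $N(v)$ is a clique) and so can be short-circuited. Second, I claim $H-v$ has no cut vertex. Suppose, for contradiction, that $c$ is a cut vertex of $H-v$, so that $H-v-c$ splits into components $A_1,\ldots,A_t$ with $t\geq 2$. The set $N(v)\setminus\{c\}$ is a clique avoiding $c$, hence connected in $H-v-c$, so it lies inside a single component, say $A_1$. Then no vertex of any other component $A_2$ is adjacent to $v$, and in $H-v-c$ it has no edge leaving $A_2$; consequently every edge of $H$ leaving $A_2$ must go to $c$, so $A_2$ is a union of components of $H-c$. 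This makes $H-c$ disconnected, contradicting the 2-connectivity of $H$. Hence $c$ cannot exist and $H-v$ is 2-connected. This lemma is the main obstacle, but the simpliciality of $v$ makes it go through cleanly.

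With the lemma in hand the induction is routine. For the base cases note that $K_2\in\mathcal{H}_1$ and that the only 2-connected chordal graph on three vertices, $K_3$, lies in $\mathcal{H}_2$ (take the edge $\{0,1\}$ and attach $2$ to both endpoints). For $n\geq 4$, pick a simplicial vertex $v$ of $H$; by the lemma $H-v$ is a 2-connected chordal graph on $n-1\geq 3$ vertices, so by the induction hypothesis it is isomorphic to some graph in $\mathcal{H}_{n-2}$. Fix such an isomorphism, which labels $V(H)\setminus\{v\}$ as $0,1,\ldots,n-2$ satisfying the three defining conditions, and label $v$ as the new last vertex $n-1$. Its neighbours in $H-v$ are exactly $N(v)$, a clique of size at least $2$, so the added vertex meets all three requirements, while the conditions for the earlier vertices are inherited unchanged. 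Therefore $H$ is isomorphic to a graph in $\mathcal{H}_{n-1}\subseteq\mathcal{H}$, completing the induction.
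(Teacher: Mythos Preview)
Your proof is correct. The key lemma---that deleting a simplicial vertex from a 2-connected graph on at least four vertices preserves 2-connectivity---is argued cleanly, and the induction on top of it is straightforward.

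Your route is genuinely different from the paper's. The paper fixes a single perfect elimination ordering $0\prec1\prec\ldots\prec i$ of $H$ and argues directly: if the labelled graph fails to lie in $\mathcal{H}$, then some $j\geq 2$ has outdegree at most~1, and the paper uses the perfect elimination \emph{tree} to show that the parent $\nu(j)$ separates the subtree below $j$ from the rest of $H$, contradicting 2-connectivity. So the paper produces a cut vertex in one shot from a single bad outdegree, whereas you peel off simplicial vertices one by one and show that 2-connectivity is never lost. Both arguments hinge on the same phenomenon (a simplicial/low-outdegree vertex cannot create a new separation), but the paper packages it via the tree $T_\prec(H)$ introduced earlier in Section~\ref{sc:ub}, while your version is self-contained and avoids that machinery. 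The paper's argument is slightly shorter because it reuses infrastructure already in place; yours is more elementary and would work verbatim in a setting where the elimination tree had not been developed.

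One cosmetic point: whether $K_2$ counts as 2-connected depends on convention, and the paper's later use of blocks (``inclusion-maximal 2-connected subgraphs \emph{and} edges that do not belong to any cycle'') suggests it does not. This does not affect your argument, since your induction only ever reduces to graphs on at least three vertices and your $K_3$ base case suffices; the $K_2$ remark is simply superfluous.
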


\begin{proof}
Assume that $H$ does not have a copy in $\mathcal{H}$. Let $\prec$ be a perfect elimination ordering of $H$, and let $T$ be the respective perfect elimination tree of $H$ (see Section~\ref{sc:ub} for the definition of a perfect elimination tree). Without loss of generality, assume that $V(H)=\{0,1,\ldots,i\}$, and $0\prec 1\prec\ldots\prec i$. We then get that there exists a vertex $j\in\{2,\ldots,i\}$ such that its outdegree is at most 1. Let $\nu(j)$ be the parent of $j$ in $T$. Let us consider a subtree $T'$ of $T$ that is induced by $\nu(j)$, $j$, and all descendants of $j$. We get that there are no edges between $V(T)\setminus V(T')$ and $V(T')\setminus\{\nu(j)\}$ in $H$. Indeed, otherwise, let $j'$ be the minimum vertex from $V(T')\setminus\{\nu(j)\}$ that is adjacent to a vertex $u$ from $V(T)\setminus V(T')$. We have $j'\neq j$, and the parent $\nu(j')$ of $j'$ in $T'$ is smaller than $j'$ and should be adjacent to $u$ as well due to chordality --- a contradiction.
\end{proof}

Note that, for $i>i'$, if $H\in\mathcal{H}_i$, then $H[\leq i']\in \mathcal{H}_{i'}$. Recall that every graph from $\mathcal{H}_i$ has at least $2i-1$ edges. Thus, by Markov's inequality, there exists $M=M(k)$ such that whp $G_n$ does not have a copy of any graph from $\mathcal{H}_M$, and, therefore, whp $G_n$ does not have a copy of any graph from $\mathcal{H}_{\geq M}$. Let $\mathcal{H}'\subset\sqcup_{i=1}^{k+1}\mathcal{H}_i$ be the set of all graphs $H$ from $\sqcup_{i=1}^{k+1}\mathcal{H}_i$ that have at least $2|V(H)|$ edges (i.e., $H\in\mathcal{H}_i$ belongs to $\mathcal{H}'$ if and only if there exists $j\leq i$ sending at least 3 edges to $0\sqcup [j-1]$ in $H$). Let $\mathcal{H}''=\sqcup_{i=k+2}^{M-1}\mathcal{H}_i$. We then observe that, for every graph $H\in\mathcal{H}'\sqcup\mathcal{H}''$, the expected number of subgraphs isomorphic to $H$ in $G_n$ is $o(n)$: a graph $H\in\mathcal{H}'$ on $i+1$ vertices has at least $2i$ edges, thus the expected number of copies of $H$ in $G_n$ is $O(n^{i+1}n^{-2i\alpha})=O(n^{1+i(1-2\alpha)})=o(n)$. In the same way, a graph $H\in\mathcal{H}''$ on $i+1$ vertices has at least $2i-1$ edges, thus the expected number of copies of $H$ in $G_n$ is $O(n^{i+1}n^{-(2i-1)\alpha})=O(n^{1+(k+2)-(2k+3)\alpha})=o(n)$. By Markov's inequality, whp there are $o(n)$ subgraphs isomorphic to a graph from $H\in\mathcal{H}'\sqcup\mathcal{H}''$ in $G_n$.

Now, let us consider an arbitrary chordal subgraph $F\subset G_n$, and let $\Sigma$ be the set of all its non-empty blocks (i.e. inclusion-maximal 2-connected subgraphs and edges that do not belong to any cycle, see~\cite[Chapter 3.1]{Diestel}). Without loss of generality, we may assume that $F$ is connected (in particular, there are no empty blocks) since whp $G_n$ is connected, and so the same is true for any its inclusion-maximal chordal subgraph. Note that every edge of $F$ belongs to some graph in $\Sigma$. By Claim~\ref{cl:2-connected_chordal}, we get that whp all graphs from $\Sigma$ have copies in $\sqcup_{i=1}^{M-1}\mathcal{H}_i$. Moreover, whp the total number of edges in the graphs from $\Sigma$ that have copies in $\mathcal{H}'\sqcup\mathcal{H}''$ is $o(n)$. For every $H\in\Sigma$ that has a copy in $\sqcup_{i=1}^{k+1}\mathcal{H}_i\setminus\mathcal{H}'$, we get 
that $H$ has at most $k+2$ vertices, and $|E(H_i)|=2|V(H_i)|-3$. Since the block-cutpoint graph is a tree, we get that there is an ordering of graphs from $\Sigma=\{H_1,\ldots,H_K\}$ such that, for every $i$, $H_i$ has at most 1 common vertex with $H_1\cup\ldots\cup H_{i-1}$. So, $|E(H_i)\setminus E(H_1\cup\ldots\cup H_{i-1})|=|E(H_i)|$ while $|V(H_i)\setminus V(H_1\cup\ldots\cup H_{i-1})|\geq|V(H_i)|-1=i$. Letting $x_H$ be the number of graphs from $\Sigma$ isomorphic to $H$, we get that whp
$$
\rho(F)\leq\frac{o(n)+\sum_{i=1}^{k+1}\sum_{H\in\mathcal{H}_i\setminus\mathcal{H}'}(2i-1)x_H}{\sum_{i=1}^{k+1}\sum_{H\in\mathcal{H}_i\setminus\mathcal{H}'}ix_H}\leq\frac{1+2k}{1+k}+o(1).
$$
The latter inequality hods true since $F$ is connected implying that the denominator in the left-hand side fraction is linear in $n$. Thus, the number of edges in $F$ is at most $\frac{1+2k}{1+k}n+o(n)$, completing the proof.

\subsection{$\alpha\leq 1/2$}
\label{sc:alpha<1/2}



 We first prove the upper bound: let us fix $\varepsilon>0$ and prove that whp $X_n\leq (1/\alpha+\varepsilon)n$. Since whp $G_n$ does not have $(\lceil2/\alpha\rceil+1)$-cliques, whp in a chordal subgraph of $G_n$ every vertex has outdegree at most $\lceil2/\alpha\rceil$. Moreover, arguing similarly to the proof of the upper bound of Theorem~\ref{th:main} from Section~\ref{sc:ub}, we see that the expected number of chordal subgraphs of $G_n$ with at least $(1/\alpha+\varepsilon)n$ edges and outdegrees at most $\lceil2/\alpha\rceil$ (for some perfect elimination ordering) can be upper bounded by 
$$
n^{n-1}A^n p^{(1/\alpha+\varepsilon)n}\leq (An^{1-1-\alpha\varepsilon+o(1)})^n=o(1)
$$
for some constant $A>0$. Here, $n^{n-1}$ is the number of rooted trees that can play a role of a perfect elimination tree and $A^n$ is the bound for the number of ways to choose vectors $\mathbf{e}_i$ (see Section~\ref{sc:ub}). Thus, by Markov's inequality, we get that whp $X_n/n\leq 1/\alpha+o(1)$ as needed.\\

We then prove the lower bound. Let us first assume that $1/\alpha=\ell\geq 2$ is an integer. For every $j\in\mathbb{N}$, consider the following chordal graph $F_j$ on $\{0\}\sqcup [j]$: the vertex $i\in[j]$ is adjacent to its $\min\{j-1,\ell\}$ predecessors, i.e. $F_j$ is the $\ell$-th power of a path of length $\ell$. Note that $F_j$ is a strictly 1-balanced graph with 1-density strictly less than $\ell$ and approaching $\ell$ with growing $j$. By Theorem~\ref{th:Rucinski}, for every $j$, whp $G_n$ has an almost $F_j$-tiling, implying that $X_n/n\geq(1/\alpha-o(1))$ whp.

Now, let $1/\alpha\notin\mathbb{Z}$. Let $\ell$ be the minimum positive integer greater than $1/\alpha$. Note that $\ell\geq 3$. Let us consider the (unique) sequence $x_i$, $i\in\mathbb{N}$, satisfying the following conditions:
\begin{itemize}
\item $(x_1,\ldots,x_{\ell})=(1,2,\ldots,\ell)$;
\item for every $i\geq\ell+1$, we define recursively $x_i$ to be the maximum integer in $[2,\ell]$ such that 
$$
\rho_i:=\frac{x_1+\ldots+x_i}{i}<\frac{1}{\alpha}.
$$
\end{itemize}
Let us consider the sequence $(s_j)_{j\in\mathbb{N}}$ of all $s$ such that
\begin{itemize}
\item $\rho_s>\ell-1$,
\item $\rho_s>\rho_i$ for all $i<s$.
\end{itemize} 
Since $\rho_i\uparrow 1/\alpha>\ell-1$, there are infinitely many such $s$. For every $j$, set $\mathbf{x}_j=(x_1,\ldots,x_{s_j})$. Consider the following graph $F_j$ on $\{0\}\sqcup [s_j]$: the vertex $i\in[s_j]$ is adjacent to its $x_i$ immediate predecessors. Note that $\rho_{s_j}$ is the 1-density of $F_j$. Let us prove that $F_j$ is chordal. It is sufficient to show that the natural order of integers is perfect elimination. We first observe that, for every $i>\ell$, we have $x_i\in\{\ell-1,\ell\}$. Indeed, the inequality $x_1+\ldots+x_{i-1}<\frac{1}{\alpha}(i-1)$, that holds for all $i>\ell$, implies $x_1+\ldots+x_{i-1}+\ell-1<\frac{1}{\alpha}i$ since $\ell-1\leq\frac{1}{\alpha}$. Also, the first $\ell+1$ vertices of $F_j$ compose a clique. Proceeding by induction, we get that any set of $\ell$ consecutive vertices in $F_j$ induces a clique. Thus, every $i\geq\ell+1$ is adjacent to at most its $\ell$ predecessors, and these predecessors induce a clique. So the considered order is indeed perfect elimination.

Since the 1-density of $F_j$ approaches $1/\alpha$ as $j$ grows, then, for every $\varepsilon>0$ and large enough $j$, an almost $F_j$-tiling has at least $(1/\alpha-\varepsilon)n$ edges. Therefore, we conclude that $X_n/n\geq(1/\alpha-o(1))$ by applying Theorem~\ref{th:Rucinski} which is possible since all $F_j$ are strictly 1-balanced due to the following claim. This completes the proof of Theorem~\ref{th:main_2}.

\begin{claim}
All graphs $F_j$ are strictly 1-balanced.
\end{claim}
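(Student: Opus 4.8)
The plan is to reduce the claim to a single extremal statement about the ambient adjacency pattern of $F_j$, and then to close the argument using precisely the record property built into the definition of $s_j$.

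\textbf{Reductions.} Since deleting edges while keeping the vertex set strictly decreases the $1$-density, and any subgraph with vertex set $S$ has at most $|E(F_j[S])|$ edges, it suffices to control \emph{induced} subgraphs. Concretely, for every $S\subseteq\{0,1,\ldots,s_j\}$ with $|S|=m\geq 2$ I would bound $\rho(F_j[S])=\frac{|E(F_j[S])|}{m-1}$; this already covers disconnected $S$, so no separate connectivity reduction is needed. I claim the central fact is the following \emph{prefix optimality}: among all $m$-element subsets $S$, the prefix $\{0,1,\ldots,m-1\}$ maximises the number of induced edges, the maximum being $x_1+\cdots+x_{m-1}=(m-1)\rho_{m-1}$ (recall vertex $i$ in the prefix contributes exactly $\min(x_i,i)=x_i$ backward edges). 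Granting this, $\rho(F_j[S])\leq\rho_{m-1}$ for every $m$-subset $S$.

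\textbf{Conclusion from prefix optimality.} Let $H\subsetneq F_j$ be a proper subgraph with vertex set $S$, $|S|=m$. If $m\leq s_j$, then $m-1<s_j$, so by the defining property of $s_j$ (namely $\rho_{s_j}>\rho_i$ for all $i<s_j$) we get $\rho(H)\leq\rho(F_j[S])\leq\rho_{m-1}<\rho_{s_j}=\rho(F_j)$. The only remaining case is $m=s_j+1$, i.e.\ $S=V(F_j)$; then $H$ proper forces at least one missing edge, whence $\rho(H)\leq\frac{(s_j)\rho_{s_j}-1}{s_j}<\rho_{s_j}$. In all cases $\rho(H)<\rho(F_j)$, so $F_j$ is strictly $1$-balanced.

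\textbf{Proving prefix optimality.} Here is where the real work lies. I would decompose the induced edges by distance: a pair $a<a+d$ with $a,a+d\in S$ is an edge of $F_j$ iff $d\leq x_{a+d}$, and since $x_i=i$ for $i\leq\ell$ and $x_i\in\{\ell-1,\ell\}$ for $i>\ell$, \emph{every} pair at distance $d\leq\ell-1$ is an edge, while a pair at distance exactly $\ell$ is an edge iff $x_{a+\ell}=\ell$. Writing $p_d(S)$ for the number of distance-$d$ pairs in $S$, this gives $|E(F_j[S])|=\sum_{d=1}^{\ell-1}p_d(S)+q(S)$, where $q(S)\leq p_\ell(S)$ counts the distance-$\ell$ pairs landing on an index $i$ with $x_i=\ell$. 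I would then compare a general $S$ to the prefix by a compression argument, repeatedly transforming $S$ toward $\{0,\ldots,m-1\}$ while never decreasing the induced edge count.

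\textbf{The main obstacle.} The delicate point of the compression is that pushing a vertex downward into a lower gap is not monotone for the edge count: a vertex moved near the floor gains backward neighbours (the segment below it is dense) but can \emph{lose forward neighbours}, exactly as the short-distance term $\sum_{d\le\ell-1}p_d$ and the distance-$\ell$ term $q(S)$ trade off against one another (for instance, a set spaced by $\ell$ has many distance-$\ell$ pairs but no short ones). Consequently the compression step must be selected carefully and its effect on both terms tracked simultaneously; the book-keeping relies on the two structural facts that any $\ell$ consecutive vertices induce a clique and that $x_i\geq\ell-1$ for $i>\ell$. Verifying that some such shift is always available and never decreases $|E(F_j[S])|$ — equivalently, that the prefix genuinely maximises induced edges despite the non-monotone values $x_i\in\{\ell-1,\ell\}$ — is the technical heart of the claim, after which the record property of $s_j$ finishes the proof cleanly.
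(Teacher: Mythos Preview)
Your reduction to induced subgraphs is correct, and the observation that prefix optimality together with the record property of $s_j$ would immediately yield strict $1$-balancedness is clean. However, the proposal is incomplete precisely where it matters: you do not prove prefix optimality. You sketch a compression --- push vertices of $S$ downward while tracking $\sum_{d\le\ell-1}p_d(S)$ and the distance-$\ell$ term $q(S)$ --- but you yourself flag that such moves are not monotone in the edge count (a vertex moved down gains backward neighbours but may lose forward ones), and you leave the existence of a non-decreasing compression sequence as an unresolved obstacle. That is the entire content of the claim; without it, nothing has been shown. Note also that prefix optimality is \emph{stronger} than what is required: it asserts $|E(F_j[S])|\le (m-1)\rho_{m-1}$ for every $m$-subset $S$, whereas strict $1$-balancedness only needs $|E(F_j[S])|<(m-1)\rho_{s_j}$, and $\rho_{m-1}$ can be substantially smaller than $\rho_{s_j}$.

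The paper's proof avoids prefix optimality entirely and works with $1$-density rather than edge counts throughout. It first reduces to induced \emph{intervals}: if $\tilde F$ has a gap of $\mu\le\ell-1$ missing vertices starting after position $i$, then either $i\ge\ell-1$ and filling the gap strictly raises the density (each reinserted vertex recovers at least $\ell>1/\alpha$ edges), or $i<\ell-1$ and deleting the initial block $\{0,\dots,i\}$ raises the density. For an interval $[i_-,i_+]$ with $i_-\ge 1$, the argument splits on whether $i_-$ lies below or above $\nu$, the first index with $x_\nu=\ell-1$. If $i_-<\nu$, adding back $\{0,\dots,i_--1\}$ recovers density exceeding $1/\alpha$, contradicting the record property of $s_j$; if $i_-\ge\nu$, a direct computation using the recurrence bound $i/\alpha-1\le x_1+\cdots+x_i<i/\alpha$ (valid for all $i\ge\nu$) together with the loss of $1+\cdots+(\ell-2)\ge 1$ edges at the truncated left end gives $\rho(\tilde F)\le 1/\alpha-\delta_{i_+}/(i_+-i_-)<1/\alpha-\delta_{i_+}/i_+=\rho_{i_+}\le\rho(F_j)$. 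So the paper compares densities of intervals against $\rho(F_j)$ directly, never needing a global edge-count maximisation over subsets.
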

\begin{proof}

Fix $j$ and assume that $F_j$ is not strictly 1-balanced. Let $\tilde F$ be a proper subgraph of $F_j$ that has 1-density $\rho(\tilde F)\geq \rho_{s_j}$. First of all let us note that without loss of generality $\tilde F$ is connected since the 1-density of $\tilde F$ does not exceed 1-densities of all its connected components. Also, we may assume that $\tilde F$ is an induced subgraph. 

Let us assume that in $\tilde F$ some {\it intermediate} vertices are missing, i.e. there exist $i_-,i,i_+\in V(F_j)$ such that $i_-<i<i_+$ and $i_-,i_+\in V(\tilde F)$, while $i\notin V(\tilde F)$. Note that the number of consecutive missing vertices could not be bigger than $\ell-1$ since otherwise $\tilde F$ is not connected. Let $i\geq 1$ be the minimum number such that, for some $\mu\in[\ell-1]$,
\begin{center}
 $i,i+\mu+1\in V(\tilde F),\,\,$ while $\,i+1,\ldots,i+\mu\notin V(\tilde F)$.
\end{center} 
If $i\geq\ell-1$, then the missing vertices from $[i+1,i+\mu]$ add at least $\mu\ell>\mu/\alpha$ edges to $\tilde F$. Indeed, every missing vertex sends at least $\ell-1$ edges to its immediate predecessors in $F_j$, and is adjacent to $i+\mu+1$. Thus, the inequality $\rho(F_j)\leq \rho(\tilde F)$ implies $\rho(F_j)<\rho(\tilde F')$, where the graph $\tilde F'$ is obtained from $\tilde F$ by adding back the vertices $i+1,\ldots,i+\mu$. If $i<\ell-1$, then $i+1$ vertices $i'\leq i$ contribute at most $(\ell-1)(i+1)$ edges to $E(\tilde F)$. Indeed, every vertex $i'\leq i$ is adjacent to at most $\ell$ and at least $\ell-1$ its immediate successors in $F_j$. Since the missing vertex $i+1$ is adjacent to all $i'\leq i$ in $F_j$, we get that every $i'\leq i$ is adjacent to at most $\ell-1$ its successors in $\tilde F$. Thus, the deletion of vertices $i'\leq i$ from $\tilde F$ leads to the graph $\tilde F'$ with $\rho(\tilde F')>\rho(F_j)$. We conclude that there is a proper subgraph in $F_j$ with the 1-density at least $\rho(F_j)$ and without missing intermediate vertices. Thus, without loss of generality, we may assume that $\tilde F$ is induced by $[i_-,i_+]$ for certain $0< i_-<i_+\leq s_j$. Note that the first inequality is strict since otherwise we get a contradiction with the definition of $F_j$. 


Let $\nu>\ell$ be the minimum number such that $x_{\nu}=\ell-1$. We have that, among the first $\nu-1$ elements of the sequence $\mathbf{x}_j$, there are exactly $\nu-\ell$ numbers equal to $\ell$. Assume first that $\nu-\ell+1\leq i_-\leq\nu-1$. Then the missing vertices $0,1,\ldots,i_--1$ add at least $(\ell-1)(i_--(\nu-\ell))+\ell(\nu-\ell)$ edges to $\tilde F$. Note that $\frac{(\ell-1)(i_--(\nu-\ell))+\ell(\nu-\ell)}{i_-}$ is minimised at $i_-=\nu-1$:
$$
\frac{(\ell-1)(i_--(\nu-\ell))+\ell(\nu-\ell)}{i_-}\geq
\frac{(\ell-1)^2+\ell(\nu-\ell)}{\nu-1}.
$$
We also recall that 
$$
 \frac{|E(F_j[\leq \nu-1])|+\ell}{(|V(F_j[\leq \nu-1])|-1)+1}=\frac{\ell(\ell-1)/2+(\nu-\ell+1)\ell}{\nu}\geq\frac{1}{\alpha},
$$
implying that 
\begin{align*}
 (\ell-1)^2+\ell(\nu-\ell) & =\nu\ell-2\ell+1\geq
 \frac{\nu}{\alpha}+\frac{\ell^2-\ell}{2}-2\ell+1\\
 &> \frac{1}{\alpha}(\nu-1)+(\ell-1)+\frac{\ell^2-\ell}{2}-2\ell+1\\
 &=\frac{\nu-1}{\alpha}+\frac{\ell^2-3\ell}{2}\geq\frac{\nu-1}{\alpha},
\end{align*}
since $\frac{1}{\alpha}>\ell-1$ and $\ell\geq 3$. So, the ``additional density'' recovered by the vertices $0,1,\ldots,i_--1$ equals 
$$
\frac{(\ell-1)(i_--(\nu-\ell))+\ell(\nu-\ell)}{i_-}>\frac{1}{\alpha}.
$$
If $i_-\leq\nu-\ell$, then the missing vertices $0,1,\ldots,i_--1$ add at least $i_-\ell$ edges to $\tilde F$. Thus, in both cases, the addition of vertices $0,1,\ldots,i_--1$ leads to the graph $\tilde F'$ satisfying the inequality $\rho(\tilde F')>\rho(F_j)$ --- contradiction with the property of $F_j$ that all $F_j[\leq s]$, $s<s_j$, have smaller 1-densities. 

It remains to consider the case $i_-\geq\nu$. Clearly, we may assume that 
 $\tilde F$ has at least $\ell+1$ vertices since otherwise $\rho(\tilde F)\leq\frac{\ell}{2}<\ell-1<\rho_{s_j}$ due to the fact that $\ell\geq 3$ and the choice of $s_j$. Due to the definition of $\mathbf{x}_j$, we have that, for every $i\geq\nu$,
$$
 \frac{i}{\alpha}-1\leq x_1+\ldots+x_i<\frac{i}{\alpha}.
$$
Let $\delta_i:=\frac{i}{\alpha}-(x_1+\ldots+x_i)$.
We then get
\begin{align}
 \frac{i_+-i_-}{\alpha}-(x_{i_-+1}+\ldots+x_{i_+}) &=
 \frac{i_+}{\alpha}-(x_1+\ldots+x_{i_+})-
 \left(\frac{i_-}{\alpha}-(x_1+\ldots+x_{i_-})\right)\notag\\
 &\geq \delta_{i_+}-1.
 \label{eq:intermediate_density}
\end{align}
On the other hand, the vertex $x_{i_-+1}$ sends $1\leq(x_{i_-+1}-(\ell-2))$ edge to $x_{i_-}$ in $\tilde F$, the vertex $x_{i_-+2}$ sends $2\leq(x_{i_-+2}-(\ell-3))$ edges to $\{x_{i_-},x_{i_+}\}$  in $\tilde F$, etc, implying 
$$
\rho(\tilde F)  \leq\frac{x_{i_-+1}+\ldots+x_{i_+}-(1+\ldots+\ell-2)}{i_+-i_-}
\leq \frac{x_{i_-+1}+\ldots+x_{i_+}-1}{i_+-i_-}.
$$ 
Therefore, due to~\eqref{eq:intermediate_density}, we get
\begin{align*}
\rho(\tilde F) & \leq\frac{x_{i_-+1}+\ldots+x_{i_+}-1}{i_+-i_-}\\
&\leq
\frac{(i_+-i_-)/\alpha-\delta_{i_+}+1-1}{i_+-i_-}\\
&=\frac{1}{\alpha}-\frac{\delta_{i_+}}{i_+-i_-}<
\frac{1}{\alpha}-\frac{\delta_{i_+}}{i_+}\\
&=\rho(F_j[\{0,1,\ldots,i_+\}])\leq\rho(F_j)
\end{align*}
by the definition of $F_j$ --- a contradiction.
\end{proof}

\section{Concluding remarks}
\label{sc:remarks}

In this paper, we study maximum chordal subgraphs in random graphs. We have found asymptotics of maximum sizes of chordal subgraphs in dense and sparse random graphs.

We believe that the concentration interval in Theorem~\ref{th:main} is not optimal, and, in particular there exists $c\in\mathbb{R}$ and $C>0$ such that whp 
$$
\left|X_n-\gamma n\log_{1/p} n-cn\log_{1/p}\log n\right|\leq Cn.
$$
Unfortunately, our techniques does not seem sufficient even to achieve the constant factor in the second-order term. Also, we are inclined to believe that $X_n$ is not concentrated in any interval of length $o(n)$.

Note that whp the maximum size of a chordal subgraph of $G(n,c/n)$, $c>0$, is $n-Y_n+o(n)$, where $Y_n$ is the number of connected components since whp $G(n,c/n)$ has $o(n)$ triangles. Thus $X_n/n\stackrel{\mathbb{P}}\to 1-\gamma(c)\in(0,1)$, where $\gamma(c)$ is the limit in probability of $Y_n/n$, which is well known~\cite[Theorem 6.12]{Bollobas}:
$$
 \gamma(c)=\frac{1}{c}\sum_{i=1}^{\infty}\frac{i^{i-2}}{i!}(c e^{-c})^i.
$$
In the same way, we believe that, for every $k\in\mathbb{N}$ and $\alpha=\frac{1+k}{1+2k}$, letting $p=cn^{-\alpha}$, we would get 
\begin{equation}
X_n/n\stackrel{\mathbb{P}}\to\gamma_k(c)\in\left(\frac{2k-1}{k},\frac{2k+1}{k+1}\right),
\label{eq:conj_sparse}
\end{equation}
where $\gamma_k(c)$ increases in $c$, and $\lim_{c\to 0}\gamma_k(c)=\frac{2k-1}{k}$, $\lim_{c\to\infty}\gamma_k(c)=\frac{2k+1}{k+1}$. A possible approach for proving (\ref{eq:conj_sparse}) is to analyse the behaviour of inclusion-maximal subgraphs in $G(n,p)$ consisting of blocks of chordal graphs with the outdegree sequence $0,1,2,\ldots,2$ of length $k+3$. 

It would be interesting to study maximum sizes of subgraphs of the random graphs that belong to other families of perfect graphs (interval graphs, strongly chordal graphs, co-graphs, etc). Note that the maximum size of a perfect graph in $G(n,p=\mathrm{const})$ equals $(1/4+o(1))pn^2$ whp, and the same is true for any hereditary family that contains all bipartite graphs but does not contain at least one 3-colourable graph~\cite{AKSamotij}.

\end{document}